\newcommand{\mycomment}[1]{}
\newcommand\mycup[2]{\overset{#2}{\underset{{#1}}{\cup}}}
\newcommand\mysum[2]{\overset{#2}{\underset{{#1}}{\textstyle\sum}}}
\newtheorem{theorem}{Theorem}
\newtheorem{lemma}{Lemma}
\newtheorem{proposition}{Proposition}
\newtheorem*{remark}{Remark}
\let\olddefinition\remark
\renewcommand{\remark}{\olddefinition\normalfont}
\def\R{{\mathbb R}}
\def\Z{{\mathbb Z}}
\def\P{{\mathbb P}}
\def\cR{{\mathcal R}}
\def\cT{{\mathcal T}}
\newcommand{\sign}{\operatorname{sign}}
\renewcommand{\int}{\operatorname{int}}
\newcommand{\lk}{\operatorname{lk}}
\renewcommand{\d}{\partial}
\newcommand{\id}{\operatorname{id}}
\newcommand\rsetminus{\mathbin{\mathpalette\rsetminusaux\relax}}
\newcommand\rsetminusaux[2]{\mspace{-4mu}
  \raisebox{\rsmraise{#1}\depth}{\rotatebox[origin=c]{-20}{$#1\smallsetminus$}}
 \mspace{-4mu}
}
\newcommand\rsmraise[1]{%
  \ifx#1\displaystyle .8\else
    \ifx#1\textstyle .8\else
      \ifx#1\scriptstyle .6\else
        .45%
      \fi
    \fi
  \fi}
\newcommand*{\missingreference}[1]{\colorbox{yellow}{?#1?}}
\newcommand*{\missingcitation}[1]{\colorbox{green}{?#1?}}
\def\@setref#1#2#3{%
  \ifx#1\relax
   \protect\G@refundefinedtrue
   \nfss@text{\reset@font\missingreference{#3}}
   \@latex@warning{Reference `#3' on page \thepage \space
             undefined}%
  \else
   \expandafter#2#1\null
  \fi}
\def\@citex[#1]#2{\leavevmode
  \let\@citea\@empty
  \@cite{\@for\@citeb:=#2\do
    {\@citea\def\@citea{,\penalty\@m\ }%
     \edef\@citeb{\expandafter\@firstofone\@citeb\@empty}%
     \if@filesw\immediate\write\@auxout{\string\citation{\@citeb}}\fi
     \@ifundefined{b@\@citeb}{\hbox{\reset@font\missingcitation{#2}}
       \G@refundefinedtrue
       \@latex@warning
         {Citation `\@citeb' on page \thepage \space undefined}}%
       {\@cite@ofmt{\csname b@\@citeb\endcsname}}}}{#1}}
\begin{document}

\title{\vspace{-2cm}On invariants of link maps in dimension four}

\author{Ash Lightfoot}
\date{}  
\maketitle

\begin{abstract}
We affirmatively address the question of whether the proposed link homotopy invariant $\omega$ of Li is well-defined. It is also shown that if one wishes to adapt the homotopy invariant $\tau$ of Schneiderman-Teichner to a link homotopy invariant of link maps, the result coincides with $\omega$.
\end{abstract}

\section{Introduction}


A link map $S^2\cup S^2\to S^4$ is a map from a union of 2-spheres with pairwise disjoint images, and a link homotopy is a homotopy through link maps. To a link map $f$, Kirk (\cite{Ki1}, \cite{Ki2}) assigned a pair of integer polynomials $\sigma(f)=(\sigma_+(f), \sigma_-(f))$ which is invariant under link homotopy and vanishes if $f$ is link homotopic to a link map that embeds either component. He posed the still-open problem of whether $\sigma(f)=(0,0)$ is sufficient to link nulhomotope $f$. In \cite{Li97},  Li sought to define a link homotopy invariant $\omega(f)=(\omega_+(f), \omega_-(f))$ to detect link maps in the kernel of $\sigma$. When $\sigma_\pm(f)=0$, the mod $2$ integer $\omega_\pm(f)$ obstructs embedding by counting (after a link homotopy) weighted intersections between $f(S^2_\pm)$ and its Whitney disks in the complement of $f(S^2_\mp)$.  While the examples with $\sigma(f)=(0,0)$ but $\omega(f)\neq (0,0)$ in that paper were found to be in error by  Pilz (\cite{Pilz}), the latter did not address another issue.  Namely, the proof that $\omega$ is invariant under link homotopy relies implicitly on the assumption that a pair of link homotopic abelian link maps are link homotopic through \emph{abelian} link maps. The first purpose of this note is to prove this assumption correct and that $\omega$ is a link homotopy invariant.

\begin{theorem}\label{prop:liwelldefined}
If $f$ and $g$ are link homotopic link maps such that $\sigma(f)=(0,0)=\sigma(g)$, then $\omega(f)=\omega(g)$.
\end{theorem}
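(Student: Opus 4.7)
The plan is to close the gap in \cite{Li97} identified in the introduction by proving the missing technical lemma---that link homotopic abelian link maps are link homotopic through abelian link maps---and then appeal to Li's existing invariance argument. Recall that an abelian link map is one for which the fundamental group of the complement of each component is infinite cyclic, which is the setting in which $\omega$ is well-defined via weighted intersections of Whitney disks. Since $\sigma_\pm(f)=0$ permits a link homotopy from $f$ to an abelian representative $\tilde f$, and similarly for $g$, it suffices to prove $\omega(\tilde f)=\omega(\tilde g)$ for an arbitrary pair of link homotopic abelian link maps $\tilde f\simeq\tilde g$.

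Given a link homotopy $\{h_t\}_{t\in[0,1]}$ from $\tilde f$ to $\tilde g$, I would first put the track $H\colon(S^2\sqcup S^2)\times I\to S^4\times I$ in general position, so the self-intersection sets of each component are generic 1-manifolds whose singular times correspond to finitely many finger and Whitney moves (cross-component intersections being forbidden by the link map hypothesis). Between singular times $h_t$ varies by ambient isotopy, which preserves the abelian condition. At a finger-move time, however, the newly-introduced canceling pair of self-intersections can in principle create a new meridian in the complement, breaking the abelian condition. The core of the argument is to modify the homotopy locally near each such time so that the finger-move guide arc is isotoped to wind around the opposite component in a way that preserves cyclicity of the complementary $\pi_1$. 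This is where $\sigma_\pm=0$ enters: it guarantees the obstructing intersections with the opposite component's dual meridian disks are algebraically paired and can therefore be tubed off, yielding a modified homotopy that factors through abelian link maps and has the same endpoints.

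Once an abelian link homotopy between $\tilde f$ and $\tilde g$ is in hand, Li's original argument applies: at each finger move the canceling pair comes equipped with a canonical small Whitney disk that is disjoint from the opposite component and so contributes trivially to $\omega_\pm$ mod $2$, and the same holds for Whitney moves. The indeterminacy in choosing Whitney disks is controlled by a $\pi_1$-valued secondary class, which vanishes mod $2$ in the abelian setting, so $\omega_\pm(\tilde f)=\omega_\pm(\tilde g)$.

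The main obstacle is the abelianization of the homotopy: one must verify that the local tubings restoring the abelian condition at each singular time can be chosen consistently, without introducing new non-abelian moments elsewhere, and without altering $\tilde f$ or $\tilde g$. Organizing the finitely many local modifications into a coherent global homotopy, and quantitatively tracking how the Whitney disk data transforms across each modification, is the heart of the argument and the place where the $\sigma=(0,0)$ hypothesis is used in an essential way.
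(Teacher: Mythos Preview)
Your proposal contains a fundamental misidentification of which elementary moves can break the abelian condition. You write that at a finger-move time ``the newly-introduced canceling pair of self-intersections can in principle create a new meridian in the complement, breaking the abelian condition.'' In fact the opposite is true: a finger move on $f_\pm$ only \emph{adds} a relation to $\pi_1(S^4\rsetminus f(S^2_\pm))$ (this is Casson's observation, and is precisely why finger moves are used to abelianize in the first place). It is the \emph{Whitney} moves---which remove a pair of double points and hence can remove a relation---that may destroy abelianness of the complement. Consequently your proposed local fix, isotoping finger-move guide arcs and tubing off intersections using $\sigma_\pm=0$, is aimed at the wrong moments and does not address the actual obstruction. (Note too that $\sigma=(0,0)$ plays no role in producing an abelian homotopy; it is needed only so that $\omega$ is defined at the endpoints.)

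The paper's argument exploits exactly this asymmetry between finger and Whitney moves. Rather than repairing each non-abelian moment locally, it first shows (Lemmas~\ref{lem:kamada-2-modify} and~\ref{lem:kamada-similar}) that finger moves and Whitney moves along a regular link homotopy can be \emph{reordered} so that all finger moves come first: $f$ is carried to an intermediate $f'$ by finger moves (and isotopies), and then $f'$ is carried to $g$ by Whitney moves. The first half is abelian because finger moves preserve abelianness; the second half, read backwards from $g$ to $f'$, is again a sequence of finger moves and hence also abelian. This reordering trick is the missing idea in your outline, and without it there is no mechanism to control the Whitney-move times.
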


The invariants $\sigma$ and $\omega$  may then be viewed, respectively, as primary and secondary obstructions to link homotoping to an embedding. For the problem of homotoping a map $S^2\to Y^4$ to an embedding, the homotopy invariants  $\mu$ of Wall (\cite{W}) and $\tau$  of Schneiderman and Teichner (\cite{ST}) form an analogous pair of obstructions. Our second purpose  is to show that if one adapts $\tau$ to the setting of link homotopy in the natural way, one obtains $\omega$. 
%
%
For a link map $f:S^2_+\cup S^2_-\to S^4$, where the signs are used to distinguish each component, write $X_\pm = S^4\rsetminus f(S^2_\mp)$ and let $f_{\pm}$ denote the restricted map $f|S^2_{\pm}:S^2_\pm\to X_{\mp}$.

\begin{theorem}\label{thm:omega-equals-li}
Let $f$ be a link map with $\sigma_+(f)=0$. Then  $f$ is link homotopic to a link map $g$ such that $\tau(g_+)$ is defined, and one has that  $\tau(g_+)=0$ if and only if  $\omega_+(f)=0$.
\end{theorem}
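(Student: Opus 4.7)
The plan is to arrange $f$ via a link homotopy so that both $\tau(g_+)$ and $\omega_+(f)$ are computed from a single common system of Whitney disks, and then to identify the two invariants by matching their weighted intersection counts and their indeterminacies.

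First I would invoke $\sigma_+(f)=0$ to run a Wall-style argument inside the complement $X_-$: the meridian-weighted signed self-intersection count of $f_+$ vanishes, so finger moves of $f_+$ performed away from $f(S^2_-)$ (which are link homotopies of $f$) produce a link-homotopic $g$ whose self-intersections of $g_+$ are paired by a framed, transverse system of Whitney disks $\{W_i\}\subset X_-$. At this point $\tau(g_+)$ is defined; and by construction the same $\{W_i\}$ computes $\omega_+(g)=\omega_+(f)$.

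Next I would compare the defining expressions on this shared data. Both invariants are signed sums over the intersection points of $g(S^2_+)$ with $\bigcup_i \operatorname{int} W_i$: Schneiderman--Teichner's $\tau(g_+)$ records each such point as an element of a group ring on $\pi_1(X_-)$, reduced modulo an indeterminacy submodule, while Li's $\omega_+(f)$ records each point by a function of its meridional linking number with $f(S^2_-)$, reduced mod $2$. After a further link homotopy arranging $f$ to be abelian---so that $\pi_1(X_-)\cong\Z$ is generated by the meridian $t$, as is justified by Theorem~\ref{prop:liwelldefined}---the $\pi_1$-weight of each intersection becomes a power of $t$ whose exponent is precisely the meridional linking number appearing in $\omega$. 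This produces a natural module map from the $\tau$-target to $\Z/2$ sending $\tau(g_+)$ to $\omega_+(f)$, so $\tau(g_+)=0$ immediately implies $\omega_+(f)=0$.

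The main obstacle is the converse direction: one must verify that, in this restricted setting, the indeterminacies of $\tau$---from alternative Whitney disks, sheet-change moves and Whitney/finger moves applied to the disks themselves, framing corrections, and whisker reparameterization---generate exactly the kernel of the reduction $\Z[\pi_1(X_-)]\to\Z/2$ appearing above. Equivalently, one must rule out any nontrivial $\tau$-class that nevertheless projects to $0\in\Z/2$. This requires a careful case-by-case bookkeeping of each $\tau$-modifying move, all of which must here be performed disjointly from $g(S^2_-)$, to check its effect on the $\Z[\pi_1(X_-)]$-valued count. Once this indeterminacy comparison is complete, the biconditional $\tau(g_+)=0\iff\omega_+(f)=0$ follows.
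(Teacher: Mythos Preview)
Your outline correctly identifies that $\tau(g_+)$ and $\omega_+(f)$ can be computed from the same Whitney-disk data, and that the crux is matching the indeterminacies. But the converse direction is not merely ``careful case-by-case bookkeeping'' of the moves you list; the essential indeterminacy you have to control is the $\mathcal{R}_4$-relation $(a,\lambda(f(S^2_+),A))=(a,\omega_2(A)\cdot 1)$, which is indexed by \emph{all} of $\pi_2(X_-)$. You give no mechanism for computing $\pi_2(X_-)$ or the intersection numbers $\lambda(f(S^2_+),A)$, and without this the target group $\Pi(X_-,f_+)$ is uncomputed and the biconditional cannot be established. The paper's key geometric input---absent from your plan---is to first link-homotope $f$ so that $f(S^2_-)$ is \emph{unknotted} in $\R^4$; then $\pi_2(X_-)$ is free as a $\Z[s,s^{-1}]$-module on explicit generators obtained by surgering linking tori, and one computes $\lambda(f(S^2_+),A)=(1-s)^2 q_A(s)$ for every immersed sphere $A$. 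A further, deliberately chosen finger move on $f(S^2_-)$ is needed to guarantee a generator $A_0$ with $\lambda(f(S^2_+),A_0)=(1-s)^2$ exactly, so that the relation $s^nt^m(1-t)^2\equiv 0$ actually holds in $\Pi(X_-,f_+)$; without this step the relations you obtain might only be $(1-t)^2q(t)\equiv 0$ for various $q$, which need not collapse the target.

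There is also an imprecision in your comparison map. The target of $\tau$ is a quotient of $\Z[\pi_1(X_-)\times\pi_1(X_-)]$, not of $\Z[\pi_1(X_-)]$: each intersection carries both a primary group element $s^{n_i}$ (from the Whitney disk) and a secondary group element $s^{m_x}$ (from the intersection point), and Li's weight is the mod~$2$ function $\overline{n_i+n_im_x+m_x}$ of the pair, not a single linking exponent. The paper shows that $\Pi(X_-,f_+)\cong\Z_2\langle t:t^2=1\rangle$ (four elements, not two) via $(s^n,s^m)\mapsto t^{\overline{n+nm+m}}$, and that $\tau(g_+)\mapsto(1+t)\,\omega_+(f)$; the constraint that $\tau$ dies under $X_-\hookrightarrow S^4$ is then used to conclude the equivalence. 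Your proposed ``reduction $\Z[\pi_1(X_-)]\to\Z/2$'' does not model this correctly.
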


Assume all manifolds are equipped with basepoints and orientations arbitrarily unless otherwise specified.

\section{Proof of Theorem \ref{prop:liwelldefined}}%
\mycomment{%
\section[\texorpdfstring{Link homotopy invariance of $\omega$}%
                        {Link homotopy invariance of omega}]
        {Link homotopy invariance of $\omega$}%
}%
A link map $f$ is said to be \textit{abelian} if $\pi_1(X_{+})\cong \Z$ and $\pi_1(X_{-})\cong \Z$, and an abelian link homotopy is a link homotopy through abelian link maps. We will say $f$ is \textit{good} if it is abelian and each restricted map $f_\pm$ is a self-transverse immersion with vanishing signed  self-intersection number. Theorem \ref{prop:liwelldefined} will be shown using the following lemma.


\begin{lemma}\label{lem:reg-htpic}
If $f$ and $g$ are regularly homotopic good link maps such that $\sigma(f)=(0,0)=\sigma(g)$, then $\omega(f)=\omega(g)$.
\end{lemma}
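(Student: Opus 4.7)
The plan is to decompose the regular link homotopy $H\colon f \simeq g$ into a finite sequence of elementary moves and verify that $\omega$ is invariant under each. By transversality, $H$ factors into ambient link isotopies, finger moves, and Whitney moves, each supported on one component at a time; since $\sigma(f)=\sigma(g)=(0,0)$, any stray cusp moves pair up and may be cancelled.

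Under an ambient link isotopy, a Whitney system for $f_+$ with disks in $X_+$ transports to one for the isotoped link map with identical weighted intersection data, so $\omega$ is preserved. For a finger move on $f_+$ guided by a framed arc $\gamma \subset X_+$, the inverse-finger supplies a canonical Whitney disk $D\subset X_+$ for the newly created pair $\{p,q\}$ of double points, whose interior is disjoint from $f_+(S^2_+)$. Adjoining $(\{p,q\},D)$ to a Whitney system witnessing $\sigma_+(f)=0$ gives a Whitney system after the move whose new term contributes zero to $\omega_+$. A Whitney move on $f_+$ is handled symmetrically, eliminating the same trivial contribution; if the disk used to perform the Whitney move differs from the one in the given Whitney system, Li's mod-$2$ invariance of $\omega$ under change of Whitney disks (\cite{Li97}) closes the gap.

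The subtler case is a finger or Whitney move on the opposite component $f_-$, which leaves $f_+$ pointwise fixed but deforms $X_+ = S^4 \setminus f(S^2_-)$. Using the abelian hypothesis $\pi_1(X_+) \cong \Z$ before and after, together with general position to place the finger arc or Whitney disk on $f_-$ disjoint from a chosen Whitney system $\{W_i\}$ for $f_+$, each $W_i$ persists unchanged as a valid Whitney disk in the modified complement, and every weighted intersection count is manifestly preserved.

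The main obstacle is securing an elementary-move decomposition of $H$ whose intermediate link maps remain good, so that $\omega$ is defined at each stage. Since goodness is not automatic along an arbitrary regular homotopy, the decomposition must be refined---using $\sigma(f)=\sigma(g)=(0,0)$---so that every finger move introducing a double-point pair is immediately compensated by a compatible Whitney disk, keeping $\pi_1(X_\pm) \cong \Z$ throughout. Once such a refinement is in hand, the elementary-move analysis above assembles into the desired equality $\omega(f) = \omega(g)$.
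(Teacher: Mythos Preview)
Your outline correctly identifies the decomposition into elementary moves and the invariance of $\omega$ under each move once the intermediate link maps are good; those parts are essentially Proposition~\ref{prop:Li}. The genuine gap is in your final paragraph, where you name the real obstacle but do not resolve it.

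Your proposed fix---``every finger move introducing a double-point pair is immediately compensated by a compatible Whitney disk, keeping $\pi_1(X_\pm)\cong\Z$ throughout''---is both misdirected and unsubstantiated. Finger moves are not the problem: a finger move on either component only adds relations to the relevant fundamental group and hence preserves abelianness. It is \emph{Whitney} moves that can destroy abelianness, and pairing a finger move with a Whitney disk does nothing to prevent this. You give no mechanism for producing a decomposition whose every stage is abelian.

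The paper's device is different and is the heart of the argument. Using two commutation lemmas (Lemmas~\ref{lem:kamada-2-modify} and~\ref{lem:kamada-similar}), one shows that any Whitney move followed by a finger move can be reordered (up to ambient isotopy fixing the other component) as a finger move followed by a Whitney move. Iterating, the entire regular homotopy from $f$ to $g$ is rearranged so that all finger moves come first, carrying $f$ to some $f'$, and all Whitney moves come afterward, carrying $f'$ to $g$. Since finger moves preserve abelianness, every stage from $f$ to $f'$ is abelian; reading the second half backwards, $f'$ is obtained from the good link map $g$ by finger moves, so every stage from $f'$ to $g$ is abelian as well. This is Lemma~\ref{prop:abelian-reg-htpy}, and it is precisely the missing ingredient in your proposal.
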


\begin{proof}[Proof of Theorem \ref{prop:liwelldefined}]
A link map $f$ may be first be perturbed so that it restricts to a self-transverse immersion on each component 2-sphere; local cusp homotopies may then be performed so that these immersions each have vanishing signed self-intersection number. Finger moves of $f(S^2_+)$ in the complement of $f(S^2_-)$, followed by finger moves of $f(S^2_-)$ in the complement of $f(S^2_+)$, may then serve to abelianize $\pi_1(X_+)$ and $\pi_1(X_-)$ (see
\cite[p. 205]{C}; also \cite{Ki1}, \cite{Li97}). \mycomment{By performing local cusp homotopies and finger moves, a link map $f$ is link homotopic to a good link map $f'$ (cf. \cite{Ki2}, \cite{Li97}).}
Denote the resulting good link map by $f'$. In \cite{Li97}, the author gives an algorithm for computing the pair of mod 2 integers  $\omega(f')=(\omega_+(f'), \omega_-(f'))$ and  defines $\omega(f)=\omega(f')$. Suppose $f''$ is another good link homotopy representative of $f$. Then $f'$ and $f''$ are regularly homotopic by \cite[Theorem 2.4]{Ki2},  so $\omega(f'')=\omega(f')$ by Lemma \ref{lem:reg-htpic}. Thus $\omega$ does not depend on the choice of good representative. Now, if $g$ is link homotopic to $f$, and  $g'$ is a good link homotopy representative of $g$, the same argument shows that $\omega(g')=\omega(f')$, so (as defined) $\omega(f)=\omega(g)$.
\end{proof}

%
\mycomment{ REMARK ABOUT DEFINITION OF TAU USING WHITNEY IMMERSION THEOREM:
Define $\tau(f')=\tau(f')$, where $f'$ is self-transverse, zero euler class, immersion approximation. If $g$ is homotopic to $f$, and $g'$ an approx of $g'$, then $f'$ and $g'$ are homotopic and hence regularly homotopic by Whitney (two immersions of $A^k$ in $X^n$, $n\geq 2k+2$ are homotopic iff reg htpic, this is said on first page (pg 281) of Smale "A Classification of Immersions of the Two-sphere.")
}
%
It remains to prove Lemma \ref{lem:reg-htpic}.  We make use of ideas from \cite{K}, to which the reader is referred for more details on finger and Whitney moves along chords. Throughout the rest of this paper, $Y$ will denote a 4-manifold.  Let $k: S^2\to Y$ be an immersion. A \emph{chord} $\gamma$ attached to $k(S^2)$ is a continuous  arc in $Y$ whose endpoints are distinct points of $k(S^2)$ (minus its double points) and whose interior is disjoint from $k(S^2)$. A chord is \emph{simple} if this arc is an embedding. If two simple chords $\gamma$ and $\gamma'$ for $k(S^2)$ are ambient isotopic in $Y$\mycomment{meaning isotopy $Y\times I\to Y$ such that $H_0=\id$, H_1\circ g=h$} by an isotopy fixing $k(S^2)$, then a finger move along either chord yields an ambient isotopic immersion. \mycomment{Note that $H_t$ 1-1 means $F_t(\alpha)\cap F_t(k(S^2)) = F_t(\alpha\cap k(S^2))$, so $\alpha$ is carried to $\beta$ through simple chords automatically.}%
The following result is a ready consequence of transversality and the isotopy extension theorem.
\begin{lemma}\label{lem:ambient}
Let $C$ be a compact subset of a  4-manifold $Y$, and  let $k: S^2\to Y$ be an immersion. Suppose $\alpha$ and $\beta$ are simple chords on $k(S^2)$ with common endpoints $p,q$. If $\alpha$ and $\beta$ are path homotopic in $Y\rsetminus C$ through chords on $k(S^2)$, then $\alpha$ and $\beta$ are ambient isotopic in $Y$ by an isotopy that carries $\alpha$ to $\beta$  and fixes $k(S^2)$ and $C$. \mycomment{Note that $H_t$ 1-1 means $F_t(\alpha)\cap F_t(k(S^2)) = F_t(\alpha\cap k(S^2))$, so $\alpha$ is carried to $\beta$ through simple chords automatically.}
\end{lemma}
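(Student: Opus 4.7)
My plan is to first upgrade the given path homotopy $H\colon I\times I\to Y\rsetminus C$ to a smooth isotopy through embedded (\emph{simple}) chords using transversality, and then to produce an ambient isotopy of $Y$ by integrating a suitable time-dependent vector field.

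For the first step, after approximating $H$ rel $\partial(I\times I)$ by a smooth map I still have $H(\cdot,0)=\alpha$, $H(\cdot,1)=\beta$, $H(0,t)=p$, $H(1,t)=q$, and $H(\int(I\times I))\subset Y\rsetminus (k(S^2)\cup C)$. General position in the 4-manifold $Y$ then allows a further perturbation, supported in a compact subset of $\int(I\times I)$ and taken small enough near $\partial(I\times I)$ that the chord condition is preserved, so that each slice $\alpha_t := H(\cdot,t)$ becomes an embedding. The relevant dimension count is that the set of ``slice'' double points $\{((s_1,t_1),(s_2,t_2)) : H(s_1,t_1)=H(s_2,t_2),\, t_1=t_2\}\subset ((I\times I)^2)\rsetminus\Delta$ is cut out by the codimension-$(\dim Y+1)=5$ system given by $H(s_1,t_1)=H(s_2,t_2)$ together with $t_1=t_2$; generically this yields a subset of dimension $4-5=-1$ in a $4$-dimensional ambient space, i.e.\ the empty set. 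The result is a smooth isotopy $\{\alpha_t\}_{t\in I}$ of simple chords from $\alpha$ to $\beta$, with constant endpoints $p$, $q$ and interiors in $Y\rsetminus(k(S^2)\cup C)$.

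For the second step, the velocity of $t\mapsto \alpha_t$ is a time-dependent vector field along $\bigcup_t \alpha_t(I)$ that vanishes at the endpoints $p$ and $q$. I will extend this to a compactly supported time-dependent vector field $V_t$ on $Y$ vanishing on $k(S^2)\cup C$, using a partition of unity subordinate to a cover consisting of (i) a tubular neighborhood of the trace that is disjoint from $C$ and meets $k(S^2)$ only in small neighborhoods of $\{p,q\}$, and (ii) the complement of a slightly smaller neighborhood. Integrating $V_t$ yields the ambient isotopy required by the lemma.

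The main obstacle I expect is the extension of the velocity field near the endpoints $p$ and $q$, where the trace meets $k(S^2)$. Locally near $p$, I will choose coordinates in which $k(S^2)$ appears as a $2$-plane, as is possible because, by the very definition of a chord endpoint, $p$ and $q$ are regular points of $k$. The velocity along the arcs $\alpha_t$ vanishes at the endpoint, matching the zero velocity prescribed on $k(S^2)$, so a Whitney-type extension supplies the required local vector field; this is then patched with the rest via the partition of unity.
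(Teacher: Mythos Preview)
Your proposal is correct and follows exactly the route the paper indicates: the paper states the lemma without proof, saying only that it ``is a ready consequence of transversality and the isotopy extension theorem,'' which is precisely your two-step plan. Your write-up in fact supplies considerably more detail than the paper does, including the relevant dimension count and the care needed at the endpoints $p,q$.
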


We first show that, roughly speaking, finger moves and Whitney moves of a single component of a link map in the complement of the  other component commute.

\begin{lemma}\label{lem:kamada-2-modify}
Let $f$ be a link map such that the restricted maps $f_\pm$ are self-transverse immersions. Suppose that an immersion $g_+$ is obtained from $f_+$ by performing a Whitney move, followed by a finger move, in $S^4\rsetminus f(S^2_-)$. Then, up to ambient isotopy in $S^4$ fixing $f(S^2_-)$, $g_+$ may be obtained from $f_+$ by performing a finger move, followed by a Whitney move, in $S^4\rsetminus f(S^2_-)$.
\end{lemma}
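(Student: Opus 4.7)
The plan is to show that, after an ambient isotopy of $S^4$ fixing $f(S^2_-)$, the Whitney disk $W$ used for the Whitney move and the chord $\gamma$ used for the subsequent finger move have disjoint tubular neighborhoods; the two moves are then supported in disjoint regions of $S^4\rsetminus f(S^2_-)$ and commute up to ambient isotopy, so the order of operations is interchangeable. Let $N(W)\cong D^4$ denote a regular neighborhood of $W$, chosen so that $N(W)\subset S^4\rsetminus f(S^2_-)$.

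First I would make the interior of $\gamma$ disjoint from $W$ by general position: a $1$-arc and a $2$-disk in a $4$-manifold can be made transverse, hence disjoint (since $1+2<4$), by an arbitrarily small ambient isotopy supported away from $f(S^2_-)$. After shrinking $N(W)$, the interior of $\gamma$ is then disjoint from $N(W)$. The delicate step concerns the endpoints. If an endpoint $p$ of $\gamma$ happens to lie in $N(W)\cap f'_+(S^2_+)$, I would choose a simple path $\beta$ on $f'_+(S^2_+)$ from $p$ to a point $p'\in f'_+(S^2_+)\rsetminus N(W)$, and modify $\gamma$ near $p$ by attaching a parallel push-off of $\beta$ in the ambient space, producing a new simple chord with endpoint $p'$ in place of $p$. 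The modified chord produces an ambient isotopic finger move by a sliding-endpoint variant of Lemma \ref{lem:ambient}; justifying this slide is the main technical point, since Lemma \ref{lem:ambient} as stated fixes the immersed surface pointwise, whereas here one needs to extend the isotopy to drag the endpoint along $f'_+(S^2_+)$ while remaining in $S^4\rsetminus f(S^2_-)$.

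Once $\gamma$ is entirely disjoint from $N(W)$, it is automatically a simple chord on $f_+(S^2_+)$, since $f_+$ and $f'_+$ agree outside $N(W)$. Performing the finger move on $f_+$ along $\gamma$ in $S^4\rsetminus f(S^2_-)$ does not affect $W$, its boundary arcs, or the intersection pair that $W$ cancels, so $W$ remains a valid Whitney disk for the resulting immersion. Performing the Whitney move along $W$ then yields an immersion ambient isotopic to $g_+$, since finger and Whitney moves supported in disjoint regions of $S^4\rsetminus f(S^2_-)$ manifestly commute up to ambient isotopy fixing $f(S^2_-)$. This provides the required reordering.
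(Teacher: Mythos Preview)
Your overall strategy---arrange that the finger-move chord and the Whitney region have disjoint supports, then commute the two operations---is the same as the paper's. The executions differ, and your version contains a gap.

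The paper fixes the $4$-ball $B$ in which the Whitney move is performed and path-homotops $\gamma$, rel endpoints and through chords on $f'(S^2_+)$, to a simple chord missing $B$; Lemma~\ref{lem:ambient} then applies directly. The key ingredient is the surjection $\pi_1(\d B\rsetminus f'(S^2_+))\to\pi_1(B\rsetminus f'(S^2_+))$ induced by inclusion: after pushing the arcs of $\gamma\cap B$ into a collar of $\d B$, each residual loop in $B\rsetminus f'(S^2_+)$ may be traded for one in $\d B\rsetminus f'(S^2_+)$. Endpoints are implicitly taken outside $B$ from the start (a harmless normalization), so no endpoint-sliding variant of Lemma~\ref{lem:ambient} is needed.

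Your ``shrink $N(W)$'' step is where the gap lies. Having perturbed the interior of $\gamma$ off the $2$-disk $W$, you want to replace $N(W)$ by a smaller regular neighborhood disjoint from $\gamma$. But $N(W)$ is not a free parameter: it is the support of the Whitney move that produced the specific immersion $f'_+$ on which $\gamma$ is a chord. Re-doing the Whitney move in a smaller ball yields an immersion only ambient isotopic to $f'_+$, by an isotopy supported in the \emph{original} $N(W)$, and carrying $\gamma$ along by that isotopy may well push it back toward $W$. Put differently, nothing you have arranged prevents the interior of $\gamma$ from meeting $f_+(S^2_+)\cap N(W)$ (the chord condition only guarantees disjointness from $f'_+(S^2_+)$), so $\gamma$ need not be a chord on $f_+$ even after your shrink. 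A fix in your spirit: in the general-position step, perturb the interior of $\gamma$ to miss the $2$-complex $W\cup f_+(S^2_+)$, not just $W$; once the endpoints are slid outside $N(W)$, $\gamma$ is then already a chord on $f_+$ and your commuting argument goes through without any shrinking.
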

\begin{proof}
By the hypotheses, there is an intermediate link map $f'$ such that $f'_-=f_-$ and a Whitney move performed in a 4-ball $B\subset S^4\rsetminus f(S^2_-)$ changes $f_+$ to $f_+'$, and a finger move of $f_+'$  along a chord $\gamma\subset S^4\rsetminus f(S^2_-)$ changes $f_+'$ to $g_+$.

If $\gamma$ is disjoint from $B$, then the lemma is immediate. Otherwise, we may assume that $\gamma$ intersects $B\rsetminus f'(S^2_+)$ along the interior of $\gamma$ in a collection of $n\geq 0$ properly embedded arcs.    One may then homotop $\gamma$ in a collar of  $\d B\rsetminus f'(S^2_+)$ to a union $\widehat\gamma \cup_{i=1}^n \alpha_i$,  of a simple chord $\widehat\gamma\subset S^4\rsetminus f(S^2_-)$ on $f'(S^2_+)$ that intersects $B$ at precisely one point $p\in \d B$, and $n$ simple loops $\{\alpha_i\}_{i=1}^n$ in $B\rsetminus f'(S^2_+)$ based at $p$. But as inclusion induces a surjection $\pi_1(\d B\rsetminus f'(S^2_+),p)\to \pi_1(B\rsetminus f'(S^2_+),p)$, we may further deduce that $\gamma$ is path homotopic in $S^4\rsetminus f(S^2_+)$ through chords on $f'(S^2_-)$ to a simple chord $\gamma'$ that misses $B$. Thus, by Lemma \ref{lem:ambient} there is an ambient isotopy in $S^4$ from  $\gamma$ to $\gamma'$ that fixes $f'(S^2_+)$ and $f(S^2_-)$.\mycomment{So have $\gamma$ is disjoint from $B$, without moving anything.} \qedhere %
\mycomment{ EXPLANATION for surjectivity: let $\alpha$ be a loop in $B\rsetminus f'(S^2_+)$. It bounds a disk in $B$; make the disk transverse to $f'(S^2_+)$. Then you can shrink $D$ into a loop union a bunch of meridinal disks to $f'$, the latter of which can be pushed into $\d B\rsetminus f'$ along $f'$.}
\end{proof}

We further require that, roughly speaking, a Whitney move of one component of a link map commutes with a finger move of the other. The proof is similar to that of Lemma \ref{lem:kamada-2-modify} but we include it for completeness.

\begin{lemma}\label{lem:kamada-similar}
Suppose that $f$ and $g$ are good link maps such that $g_+$ is obtained from $f_+$ by performing a Whitney move in $S^4\rsetminus f(S^2_-)$ and $g_-$ is obtained from $f_-$ by performing a finger move in $S^4\rsetminus g(S^2_+)$. Then \emph{(}up to ambient isotopy in $S^4$ fixing  $f(S^2_+)$\emph{)} $g_-$ may be obtained from $f_-$ by performing a finger move in $S^4\rsetminus f(S^2_+)$ and \emph{(}up to ambient isotopy in $S^4$  fixing $g(S^2_-)$\emph{)} $g_+$ may be  obtained by performing a Whitney move  of $f_+$ in $S^4\rsetminus g(S^2_-)$.
\end{lemma}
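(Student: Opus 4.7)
The plan is to mimic the proof of Lemma~\ref{lem:kamada-2-modify} with the roles of the two components reversed. Write $B \subset S^4 \setminus f(S^2_-)$ for a 4-ball supporting the Whitney move that takes $f_+$ to $g_+$, so that $f(S^2_+)$ and $g(S^2_+)$ agree outside $B$, and let $\gamma \subset S^4 \setminus g(S^2_+)$ be the simple chord on $f_-(S^2_-)$ whose finger move produces $g_-$; its endpoints lie on $f_-(S^2_-)$ and hence outside $B$. The aim is to show that, up to a path homotopy through chords on $f_-(S^2_-)$ in $S^4 \setminus g(S^2_+)$, the chord $\gamma$ may be replaced by a simple chord $\gamma'$ disjoint from $B$. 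Once the finger-move chord and the Whitney region are disjoint, the two operations commute and both conclusions follow.

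The technical step, which is the main obstacle, is the direct analogue of the pushing-off argument in Lemma~\ref{lem:kamada-2-modify}. By transversality one may assume that $\gamma$ meets $B \setminus g(S^2_+)$ along its interior in a finite collection of properly embedded arcs. Pushing these arcs into a collar of $\partial B \setminus g(S^2_+)$ expresses $\gamma$, up to the desired kind of path homotopy, as a simple chord $\widehat{\gamma}$ entering $B$ at exactly one point $p \in \partial B$ concatenated with a finite collection of simple loops $\alpha_1,\ldots,\alpha_n$ in $B \setminus g(S^2_+)$ based at $p$. The crucial input is the surjection $\pi_1(\partial B \setminus g(S^2_+),p) \twoheadrightarrow \pi_1(B \setminus g(S^2_+),p)$ induced by inclusion, which follows from general position applied to the 2-complex $g(S^2_+) \cap B$ in the 4-ball $B$: any loop in $B \setminus g(S^2_+)$ bounds a disk in $B$ that, after being made transverse to $g(S^2_+)$, reduces to a product of meridians pushed to $\partial B$. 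Homotoping each $\alpha_i$ into $\partial B$ yields the simple chord $\gamma'$ disjoint from $B$, path homotopic to $\gamma$ through chords on $f_-(S^2_-)$ in $S^4 \setminus g(S^2_+)$.

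Finally, Lemma~\ref{lem:ambient} applied with $k = f_-$ and $C = g(S^2_+)$ furnishes an ambient isotopy of $S^4$ fixing $f_-(S^2_-) \cup g(S^2_+)$ that carries $\gamma$ to $\gamma'$. Because $\gamma'$ is disjoint from $B$ and $f(S^2_+) = g(S^2_+)$ outside $B$, the chord $\gamma'$ lies in $S^4 \setminus f(S^2_+)$, and one may choose a thin tubular neighborhood of $\gamma'$ disjoint from both $f(S^2_+)$ and $B$. The finger move of $f_-$ along $\gamma'$ in this neighborhood then takes place in $S^4 \setminus f(S^2_+)$ and, via the framing transported by the ambient isotopy above, is ambient isotopic to $g_-$; this yields the first conclusion. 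For the second, since the finger tube is disjoint from $B$ and hence from the Whitney disk $W \subset B$, the same Whitney move $f_+ \rightsquigarrow g_+$ may be performed after the finger move in $S^4 \setminus g(S^2_-)$, producing $g_+$ up to a further ambient isotopy fixing the new $-$-component.
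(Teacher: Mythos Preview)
Your argument is correct and follows the paper's proof essentially line for line: set up the Whitney 4-ball $B$ and the finger chord $\gamma$, use transversality and the surjection $\pi_1(\partial B\setminus g(S^2_+))\twoheadrightarrow\pi_1(B\setminus g(S^2_+))$ to push $\gamma$ off $B$ through chords in $S^4\setminus g(S^2_+)$, and then invoke Lemma~\ref{lem:ambient}. The only difference is that you spell out the concluding reordering step in more detail than the paper, which simply writes ``Now apply Lemma~\ref{lem:ambient}.''
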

\begin{proof}
Let $B$ be a 4-ball in $S^4\rsetminus f(S^2_-)$ such that a Whitney move performed in $B$ changes $f_+$ to $g_+$, and let $\gamma$ be a simple chord in $S^4\rsetminus g(S^2_+)$ on $f(S^2_-)$ such that a finger move of $f_-$ along $\gamma$ changes $f_-$ to $g_-$.  If $\gamma$ is disjoint from $B$ then the lemma holds without the need for an additional isotopy (note that $f(S^2_+)$ and $g(S^2_+)$ coincide outside $B$)\mycomment{Note that if $\gamma$ is disjoint from $B$, then it is also disjoint from $f(S^2_+) = (f(S^2_+)\cap B) \cup (g(S^2_+)\rsetminus B)$}. Otherwise, we may assume that $\gamma$  intersects $B\rsetminus g(S^2_+)$ along the interior of $\gamma$ in a finite collection of properly embedded arcs.
\mycomment{Otherwise, after an isotopy rel endpoints we may assume that $\gamma$ is disjoint from $f(S^2_+)$ and intersects $B$ in a finite collection of simple arcs.}
Since inclusion induces a surjection $\pi_1(\d B\rsetminus g(S^2_+))\to \pi_1(B\rsetminus g(S^2_+))$, as in the proof of Lemma \ref{lem:kamada-2-modify} one may path homotop $\gamma$ through chords on $g(S^2_+)$ to a simple chord that misses $B$. \mycomment{
\begin{lemma}\label{lem:ambient}
Let $C$ be a compact subset of a  4-manifold $Y$.  Suppose $\alpha$ and $\beta$ are simple chords on $k(S^2)$ with common endpoints $p,q$. If $\alpha$ and $\beta$ are path homotopic in $Y\rsetminus C$ through chords on $k(S^2)$, then $\alpha$ and $\beta$ are ambient isotopic in $Y$ by an isotopy that carries $\alpha$ to $\beta$ through chords and fixes $k(S^2)$ and $C$. \mycomment{Note that $H_t$ 1-1 means $F_t(\alpha)\cap F_t(k(S^2)) = F_t(\alpha\cap k(S^2))$, so $\alpha$ is carried to $\beta$ through simple chords automatically.}
\end{lemma}}%
Now apply Lemma \ref{lem:ambient}.
\end{proof}

We can now prove that if two good link maps are regularly link homotopic then they are connected by an \emph{abelian} link homotopy.

\begin{lemma}\label{prop:abelian-reg-htpy}
If $f$ and $g$ are regularly homotopic good link maps,  then there is a regular homotopy from $f$ to $g$ consisting of a sequence of abelian link homotopies that alternately fix one component.
\end{lemma}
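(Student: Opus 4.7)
The plan is to take a generic decomposition of the given regular homotopy and rearrange its constituent moves using Lemmas~\ref{lem:kamada-2-modify} and~\ref{lem:kamada-similar}. By making $H\colon f\simeq g$ generic via transversality, it decomposes into a finite sequence of elementary moves, each a finger move or Whitney move on one of $f_+$ or $f_-$ performed in the complement of the other; any ambient isotopies between them can be absorbed via Lemma~\ref{lem:ambient}. Using Lemma~\ref{lem:kamada-similar} repeatedly (together with analogous finger--finger and Whitney--Whitney commutation statements between the two components, whose proofs use the same $\pi_1$-surjectivity chord-pushing argument) I would push all moves on $f_+$ before all moves on $f_-$, producing a two-block decomposition $f\to f^{(1)}\to g$. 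The junction link map $f^{(1)}$ has $f^{(1)}_+$ ambient isotopic to $g_+$ and $f^{(1)}_-=f_-$, so $\pi_1(X_\pm^{(1)})\cong\mathbb{Z}$ and $f^{(1)}$ is abelian.

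The remaining task is to verify that each of the two blocks is itself an \emph{abelian} link homotopy, not merely beginning and ending at abelian link maps. Within the block of $f_+$-moves (a regular homotopy of $f_+$ in $S^4\rsetminus f(S^2_-)$ from $f_+$ to $f^{(1)}_+$), I would use Lemma~\ref{lem:kamada-2-modify} to pair each finger move of $f_+$ with a subsequent Whitney move of $f_+$, then arrange the paired chord and Whitney disk to lie inside a small $4$-ball disjoint from $f(S^2_-)$. Locally, such a finger--Whitney pair is an ambient isotopy supported in this ball, which preserves $\pi_1(X_-)\cong\mathbb{Z}$ throughout. Treating the block of $f_-$-moves symmetrically then yields the desired decomposition of $H$ into two alternating abelian link homotopies, each fixing one component.

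The main obstacle I anticipate is the localization step: arranging each paired finger chord and Whitney disk to sit inside a single small ball so that the intermediate link map (after the finger move and before the Whitney move) has $\pi_1(X_-)\cong\mathbb{Z}$. Addressing this requires an additional chord-pushing argument in the spirit of the proofs of Lemmas~\ref{lem:kamada-2-modify} and~\ref{lem:kamada-similar}, exploiting surjectivity at the level of $\pi_1$ to homotope chords and Whitney disks into controlled position, so that each short finger--Whitney pair truly passes only through abelian link maps rather than only starting and ending at them.
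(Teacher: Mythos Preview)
Your plan diverges from the paper's, and the ``main obstacle'' you flag is a genuine gap rather than a technicality. The paper does not sort the moves by \emph{component}; it sorts them by \emph{type}. Using Lemmas~\ref{lem:kamada-2-modify} and~\ref{lem:kamada-similar} (and their symmetric versions) one commutes every Whitney move past every subsequent finger move until the sequence reads: all finger moves (on either component, still alternately fixing one component) taking $f$ to some $f'$, followed by all Whitney moves taking $f'$ to $g$. The key observation you are missing is that a finger move on one component in the complement of the other preserves abelianness: the complement of the fixed component is literally unchanged, while a finger move only imposes a commutator relation on $\pi_1$ of the complement of the moving component (cf.\ Casson~\cite{C}), so an already abelian $\pi_1\cong\Z$ remains $\Z$. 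Hence $f\to f'$ is through abelian link maps. For the Whitney half one simply reverses time: $f'$ is obtained from the good link map $g$ by finger moves, so the same observation shows $g\to f'$ (and hence $f'\to g$) is abelian. No pairing or localization is needed.

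By contrast, your localization step cannot in general be carried out. First, within your $f_+$-block the number of finger moves and Whitney moves need not agree, since regularly homotopic good link maps may have different numbers of double points on a given component; so there may be nothing to pair. Second, even when the counts match, a finger move followed by a Whitney move confined to a common $4$-ball is an ambient isotopy only when the Whitney disk is the model disk canceling precisely the pair just created; neither Lemma~\ref{lem:kamada-2-modify} nor the $\pi_1$-surjectivity chord-pushing argument gives any control over which double points a given Whitney disk pairs, or over its interior. The paper's fingers-first reordering together with the time-reversal of the Whitney half sidesteps this entirely.
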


\begin{proof}
As in the proof of \cite[Theorem 2.4]{Ki2}, there is a regular homotopy taking $f$ to $g$ consisting of a sequence of regular homotopies that alternately fix one component.  By Lemmas \ref{lem:kamada-2-modify} and \ref{lem:kamada-similar} this sequence can be chosen to first consist of finger moves (and ambient isotopies) alternately fixing one component, carrying $f$ to a link map $f'$, then a sequence of Whitney moves (and ambient isotopies) alternately fixing one component, carrying $f'$ to $g$.

Now, finger moves and ambient isotopy preserve abelianess, so $f$ and $f'$ are connected by a sequence of abelian, regular link homotopies alternately fixing one component. On the other hand, $f'$ is obtained from $g$ by a sequence of finger moves (and ambient isotopies) alternatively fixing one component, so these link maps are also connected by abelian, regular link homotopies alternately fixing one component. 
\end{proof}

\begin{proof}[Proof of Lemma \ref{lem:reg-htpic}]
By Lemma \ref{prop:abelian-reg-htpy}, the link map $f$ is carried to $g$ by a sequence of abelian, regular link homotopies that alternately fix each component. Lemma \ref{lem:reg-htpic} then follows from the following proposition, which can be deduced from the proof of \cite[Proposition 4.2]{Li97} (and which is expounded upon in \cite[Satz 4.14]{Pilz}).
\end{proof}

\begin{proposition}\label{prop:Li}
Let $h$ be a good link map with $\sigma_+(h)=0$.
\begin{itemize}\itemsep -0.1cm
    \item[(i)]  If a good link map $h'$ is obtained from  $h$ by performing a regular homotopy of $h_+$  in $S^4\rsetminus h(S^2_-)$, then $\omega_+(h')=\omega_+(h).$\hspace{\stretch1}\ensuremath\qedsymbol
    \item[(ii)] \mycomment{$\omega_+$ is invariant under a regular homotopy of $f_-$ in $S^4\rsetminus f(S^2_+)$ to a map $g_-$ \emph{if $\pi_1(S^4\rsetminus g(S^2_-))$ is abelian;}}%
        If a good link map $h'$  is obtained from $h$ by performing a regular homotopy of $h_-$ in $S^4\rsetminus h(S^2_+)$, then $\omega_+(h')=\omega_+(h)$.

\end{itemize}
\end{proposition}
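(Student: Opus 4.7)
My plan is to first recall that $\omega_+(h)$ is defined, for a good link map $h$ with $\sigma_+(h)=0$, as follows: one pairs the double points of $h_+\colon S^2_+\to X_+$ (possible because $\sigma_+(h)=0$), chooses framed immersed Whitney disks $W_1,\ldots,W_k\subset X_+$ for these pairs, and counts modulo $2$ the transverse intersection points of $h_+(S^2_+)$ with $\bigcup \int W_i$, each weighted by its $\Z$-valued linking with $h(S^2_-)$ via the isomorphism $\pi_1(X_+)\cong\Z$. I would prove the proposition by verifying (a) that this count is independent of the choices of pairing, Whitney disks, and framings, and then (b) that it is unchanged by each elementary move of the asserted regular homotopies. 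Part (a) is already needed to define $\omega_+$ in \cite{Li97}.

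For part (i), I would factor the regular homotopy of $h_+$ in $X_+$ into a sequence of ambient isotopies in $X_+$, finger moves, and Whitney moves of $h_+$ confined to $X_+$, and check each case in turn. Ambient isotopy in $X_+$ tautologically preserves the weighted count since it carries every choice along. A finger move in $X_+$ creates a canceling pair of double points together with a canonical framed Whitney disk supported near the finger-move arc, disjoint from $h_+(S^2_+)$ and from all previous $W_i$; enlarging the system by this pair leaves the count unchanged, and by (a) the count is also unaffected by which particular augmented system one uses. A Whitney move is dual: one selects the Whitney disk of the move itself to be part of the data, and removes the corresponding canceling pair. After these checks, combining with (a) finishes part (i).

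Part (ii) is more delicate because the ambient space changes from $X_+=S^4\rsetminus h(S^2_-)$ to $X'_+=S^4\rsetminus h'(S^2_-)$, even though $h_+$, its double points, and (after a small ambient isotopy supported away from those points) the $W_i$ may all be kept the same. First I would factor the regular homotopy of $h_-$ into ambient isotopies, finger moves, and Whitney moves of $h_-$ in $S^4\rsetminus h(S^2_+)$. For a single finger move of $h_-$ along a chord $\gamma$, I would make the guiding $2$-disk transverse to $\bigcup W_i$; the move then modifies $h_-\cap W_i$ by canceling pairs along $\gamma$ but does not move any point of $h_+(S^2_+)\cap\int W_i$. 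Finally, under the identification $\pi_1(X_+)\cong\Z\cong\pi_1(X'_+)$ by meridians of $h(S^2_-)$ and $h'(S^2_-)$, I would verify that every weighting loop for a surviving intersection point can be isotoped off the support of the finger move, so its class is preserved. The Whitney move of $h_-$ is dual.

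The main obstacle will be part (ii), specifically checking that the weightings transform correctly under the identification $\pi_1(X_+)\cong\pi_1(X'_+)$. This requires simultaneously deforming all weighting loops off the support of the regular homotopy of $h_-$, which in turn demands a careful choice of base-point and weighting paths throughout the homotopy. The detailed verification is carried out in \cite[Proposition 4.2]{Li97} and, in expanded form, in \cite[Satz 4.14]{Pilz}, and I would follow their arguments for this step.
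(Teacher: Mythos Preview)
Your proposal is correct and matches the paper's approach: the paper gives no self-contained proof of this proposition, instead stating that it ``can be deduced from the proof of \cite[Proposition 4.2]{Li97} (and \ldots\ is expounded upon in \cite[Satz 4.14]{Pilz})'', and remarking that part (i) is essentially a special case of the well-definedness proof for $\tau$ in \cite{ST} while part (ii) is the step where the ambient manifold changes---exactly the outline you give before deferring to the same references. One notational slip: in the paper's conventions $h_+$ maps into $X_- = S^4\rsetminus h(S^2_-)$, not $X_+$, so your Whitney disks $W_i$ live in $X_-$ and the relevant fundamental group is $\pi_1(X_-)$.
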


\begin{remark}
Part (i) of this proposition is essentially a special case of the proof in $\cite{ST}$ that the $\tau$-invariant is well-defined, while part (ii) is unique in that the ambient manifold $X_-$, into which $h_+$ maps, is allowed to change.
\end{remark}

\section{Proof of Theorem \ref{thm:omega-equals-li}}

We begin with some preliminary definitions concerning algebraic intersections of immersed surfaces in 4-manifolds. The reader is referred to \cite{FQ} for more details on the subject.

\subsection{Intersection numbers in 4-manifolds}

Suppose $A$ and $B$ are properly immersed, self-transverse 2-spheres or 2-disks in a 4-manifold $Y$. Suppose further that $A$ and $B$ are transverse and that each is equipped with a path (a \emph{whisker}) connecting it to the basepoint of $Y$ . 

For an intersection point $x\in A\cap B$, let $\lambda(A,B)_x \in \pi_1(Y)$ denote the homotopy class of a loop that runs from the basepoint of $Y$ to $A$ along its whisker, then along $A$ to $x$, and back to the basepoint along $B$ and its whisker. Define $\sign_{A,B}(x)$ to be $1$ or $-1$ depending on whether or not, respectively, the orientations of $A$ and $B$ induce the orientation of $Y$ at $x$. The (algebraic) intersection ``number'' $\lambda(A,B)$ between $A$ and $B$  is then defined as the sum in the group ring $\Z[\pi_1(Y)]$ of $\sign(x)\lambda(A,B)_x$ over all such intersection points. The value of $\lambda(A,B)$ is invariant under homotopy rel boundary of $A$ or $B$ (\cite{FQ}), but depends on the choice of basepoint of $Y$ and the choices of whiskers and orientations.

The following two observations will be useful. If $x, y\in A\cap B$, then the product of $\pi_1(Y)$-elements $\lambda(A,B)_x\hskip0.02cm(\lambda(A,B)_y)^{-1}$ is represented by a loop that runs from the basepoint to $A$ along its whisker, along $A$ to $x$, then along $B$ to $y$, and back to the basepoint along $A$ and its whisker.
Secondly, if $D_A\subset A$ is a 2-disk that is equipped with the same whisker  and oriented consistently with $A$, then $\lambda(A,B)_x = \lambda(D_A,B)_x$ and $\sign_{A,B}(x)=\sign_{D_A,B}(x)$ for each $x\in D_A\cap B$.

\subsubsection{Surgering tori to 2-spheres}\label{surger}

Suppose $T$ is an embedded torus {(}or punctured torus, resp.{)} in $Y\setminus \int B$ and suppose there is a circle $\delta_1\subset T$ that is nulhomotopic in $Y$. Choose an immersed 2-disk $D$ in $Y$ that is bound by $\delta_1$ and transverse to $T$ and $B$, and choose a normal vector field $\phi$ to $\delta_1$ on $T$. Let $\delta_1'\subset T$ denote a nearby push-off of $\delta_1$  along $\phi$. Extend $\phi$ over $D$ and let $D'$ denote a pushoff of $D$ along $\phi$, bound by $\delta_1'$ and which we may assume is also transverse to $B$\mycomment{bend $\phi$ near the intersects if necessary}.  If $D$ is oriented and $D'$ has  the orientation induced as the pushoff, then  intersections between $B$ and $D\cup D'$ occur as finitely many nearby pairs of points $\{x_i, x_i'\}_{i=1}^n$, where $x_i\in \int D$ and $x_i'\in \int D'$ are of opposite sign.  Thus, removing from $T$ the interior of the annulus bound by $\delta_1 \cup \delta_1'$ and attaching $D\cup D'$ yields an immersed 2-sphere (or 2-disk with boundary $\d T$, resp.) $S$ in $Y$ such that the intersections between $B$ and $S$ are transverse and occur precisely at the pairs of points $\{x_i, x_i'\}_{i=1}^n$. Furthermore, the algebraic intersections between $S$ and $B$ may be calculated using the following lemma. Let $[\alpha]$   denote the class in $\pi_1(Y)$ of a based loop $\alpha$ in $Y$, let $\overline{\gamma}$ denote the reverse of a path $\gamma$, and let $\ast$ denote composition of paths.

\begin{lemma}\label{lem:lambda-surgery}
 Let $\delta_2$ be an oriented, simple circle on $T$ that intersects each of $\delta_1$ and $\delta_1'$ exactly once, at points $z$ and $z'$ (respectively), and is tangent to $\phi$ at $z$. Let $\iota$ be  a path in $Y$ from its basepoint to $z$. If $S$ and $D$ are oriented consistently and both equipped with the whisker $\iota$, then
\[
    \lambda(S, B) = (1 - [\iota\ast\delta_2\ast\overline{\iota}])\lambda(D, B).
\]
\end{lemma}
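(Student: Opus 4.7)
\begin{proofw}[Proof plan for Lemma \ref{lem:lambda-surgery}]
The plan is to compute $\lambda(S,B)$ directly as a sum over the paired intersection points $\{x_i,x_i'\}_{i=1}^n$, using the second observation of the previous subsection to handle the $x_i \in D \subset S$ terms for free, and analyzing a natural loop in $Y$ to handle the $x_i'\in D'\subset S$ terms.

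First I would fix auxiliary data: a whisker $w_B$ and sheet-paths $p_y^B$ on $B$ so that the return path from any $y\in B$ to the basepoint is $\overline{p_y^B}\ast \overline{w_B}$; a path $q_i\subset D$ from $z$ to $x_i$; the $\phi$-pushoff $q_i'\subset D'$ from $z'$ to $x_i'$; and a short arc $\tau_i\subset B$ from $x_i$ to $x_i'$, chosen so that $p_{x_i'}^B = p_{x_i}^B \ast \tau_i$. Next, decompose $\delta_2$ based at $z$ as $\delta_2 = a\ast b$, where $a$ is the short sub-arc of $\delta_2$ from $z$ to $z'$ running through the removed annulus (this is the arc tangent to $\phi$ at $z$) and $b$ is the complementary arc from $z'$ back to $z$, which lies in $S$.

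For each $i$, the contribution of $x_i\in D\subset S$ to $\lambda(S,B)$ equals $\sign_{D,B}(x_i)\,\lambda(D,B)_{x_i}$ by the ``subdisk'' observation, since $D\subset S$ is consistently oriented and whiskered by $\iota$. For $x_i'\in D'\subset S$, I take the path in $S$ from $z$ to $x_i'$ to be $\overline{b}\ast q_i'$, which avoids the removed annulus; then
\[
\lambda(S,B)_{x_i'} \cdot \lambda(D,B)_{x_i}^{-1} \;=\; [\,\iota \ast L_i \ast \overline{\iota}\,],
\qquad L_i := \overline{b}\ast q_i' \ast \overline{\tau_i}\ast \overline{q_i}.
\]
The key step is to identify $[L_i]\in\pi_1(Y,z)$. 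Since $D'$ is a $\phi$-pushoff of $D$, the map $q_i\times[0,1]\to Y$ sending $(s,t)$ to the time-$t$ pushoff of $q_i(s)$ sweeps out an immersed disk whose boundary $q_i\ast \psi_i\ast\overline{q_i'}\ast\overline{\psi_z}$ is nulhomotopic, where $\psi_z$ is the pushoff arc from $z$ to $z'$ and $\psi_i$ the pushoff arc from $x_i$ to $x_i'$. Hence $\psi_i \simeq \overline{q_i}\ast\psi_z\ast q_i'$, and since $\tau_i$ and $\psi_i$ are both small arcs in a Euclidean neighborhood of $x_i$ with common endpoints, $\tau_i\simeq\psi_i$. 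Substituting in $L_i$ collapses $q_i'\overline{q_i'}$ and $q_i\overline{q_i}$, yielding $L_i \simeq \overline{b}\ast\overline{\psi_z}$. Finally, $\psi_z\simeq a$ rel endpoints (both arcs from $z$ to $z'$ lie in the thin annulus bounded by $\delta_1\cup\delta_1'$ and follow the $\phi$-direction at $z$), so $L_i \simeq \overline{b}\ast\overline{a} = \overline{\delta_2}$.

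Combining with $\sign_{S,B}(x_i') = -\sign_{S,B}(x_i) = -\sign_{D,B}(x_i)$ and summing over $i$ gives
\[
\lambda(S,B) \;=\; \sum_{i=1}^n \sign_{D,B}(x_i)\bigl(1 - [\iota\ast\delta_2\ast\overline{\iota}]\bigr)\,\lambda(D,B)_{x_i} \;=\; \bigl(1-[\iota\ast\delta_2\ast\overline{\iota}]\bigr)\lambda(D,B),
\]
up to a choice of orientation of $\delta_2$ (the ``tangent to $\phi$'' condition fixes this so that $\overline{\delta_2}$ appears in the loop above, producing the stated sign). The main obstacle is the careful bookkeeping in the third step: one must verify that the pushoff square is genuinely a nulhomotopic disk in $Y$ (which uses that $\phi$ extends over $D$ and that $D'$ is the resulting pushoff), and that $\psi_z$ can be homotoped rel endpoints to $a$ inside the thin annulus, so that $L_i$ collapses precisely to (a parameterization of) $\delta_2$.
\end{proofw}
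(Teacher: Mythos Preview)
Your proposal is correct and follows essentially the same approach as the paper's proof: both compute $\lambda(S,B)$ by pairing contributions from $x_i$ and $x_i'$, identify the $x_i$-term with $\lambda(D,B)_{x_i}$ via the subdisk observation, and show that the loop relating the $x_i'$-term to the $x_i$-term is homotopic to (a conjugate of) $\delta_2^{\pm 1}$. Your explicit pushoff-square argument (showing $\tau_i\simeq\psi_i\simeq\overline{q_i}\ast\psi_z\ast q_i'$ and $\psi_z\simeq a$) is just a more detailed unpacking of what the paper does in one line by ``collapsing $D'$ onto $D$ except near its intersections with $B$''; indeed your $L_i$ is exactly the reverse of the paper's loop $\gamma_i\ast\beta_i\ast\overline{\gamma_i'}\ast\widehat\delta_2$, and both arguments arrive at the same conclusion up to the orientation ambiguity of $\delta_2$ that you note.
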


\begin{proof}
For each $i$, let $\gamma_i$  be a path on $D$ connecting $z$ to $x_i$ (that does not pass through any double points) and let $\gamma_i'$ be its pushoff along $\phi$, connecting $z'$ to $x_i'$. Let $\beta_i$ be a path on $B$ from $x_i$ to $x_i'$ (that does not pass through any double points), and let $\widehat \delta_2$ be the arc $\delta_2\cap S$, oriented to run from $z'$ to $z$. Then the product $\lambda(S,B)_{x_i}(\lambda(S,B)_{x_i'})^{-1}$ is represented by the loop
\begin{align}\label{eqn:loop}
    \iota\ast \gamma_i \ast \beta_i\ast \overline{\gamma_i'}\ast \widehat\delta_2\ast \overline{\iota}.
\end{align}
Homotoping $S$ (rel boundary) by collapsing $D'$ onto $D$ except near its intersections with $B$\mycomment{because want transverse to $A$}, one sees that the loop \eqref{eqn:loop} is homotopic in $Y$ to the loop  $\iota\ast\delta_2\ast\overline{\iota}$. Thus, equipping $D$ with the whisker $\iota$ and the same orientation as $S$, we have
\[
    \lambda(S,B)_{x_i'} = [\iota \ast\overline{\delta_2}\ast\overline{\iota}] \lambda(S,B)_{x_i} = [\iota \ast \overline{\delta_2}\ast\overline{\iota}] \lambda(D,B)_{x_i}
\]
and $\sign_{S,B}(x_i') = \sign_{D',B}(x_i') = -\sign_{D,B}(x_i)$. Summing over all such pairs of intersections yields
\begin{align*}
    \lambda(S,B) &= \mysum{i}{} \sign_{S,B}(x_i)\lambda(S,B)_{x_i} + \sign_{S,B}(x_i')\lambda(S,B)_{x_i'}\\
    &= \mysum{i}{} (1 - [\iota \ast\overline{\delta_2}\ast\overline{\iota}])\sign_{D,B}(x_i)\lambda(D,B)_{x_i}\\
    &= (1 - [\iota \ast\overline{\delta_2}\ast\overline{\iota}])\lambda(D,B).\qedhere
\end{align*}
\mycomment{ explaining change of basepoint:
If basepoint is $z$. Then $\lambda(S,B)_x\overline{\lambda(S,B)}_x$ is $z$ to $x$ along $D$, to $x'$ along $B$, along $D'$ to $x$, along $\delta_2$ to $z$. Homotopic to $z$ to $x$ along $D$, to $x$ along $B$, along $D$ to $z$, $\delta_2$ to $z$, which is homotopic to $\delta_2$.
If basepoint is $z'$. Then conjugate by path $z'$ to $z\in \delta_2$.
}
\end{proof}

\mycomment{
\begin{remark}
Referring to the notation above, the construction of $S$ required the data $T$, $\delta_1$, $D$ and $\phi$
\end{remark}
{\color{red} Referring to the notation of Lemma \ref{lem:lambda-surgery}, we shall say that $T'$ is the result of surgery on $T$ along $\delta_1$.}
}

\mycomment{
\begin{lemma}\label{lem:lambda-surgery}
Let $B$ be a 2-disk (rel $\d$) or 2-sphere in a 4-manifold $Y$, and let $T$ be an embedded torus (punctured torus) in the complement of $B$.
Suppose $\delta_1\subset T$ is a curve that is nulhomotopic in $Y$ and let $\delta_2\subset T$ be a curve that is dual to $\delta_1$.  
Then surgery on $T$ along $\delta_1$ yields a 2-sphere (or 2-disk) in $Y$ such that
\[
    \lambda(A, B) = (1 - [\delta_2])\lambda(D, B),
\]
where $D$ is a 2-disk in $X$ bound by $\delta_1$ oriented appropriately, and $[\cdot]$ denotes the class in $\pi_1(Y,\ast)$.
\end{lemma}
}

\subsection{Unknotted immersions and link maps}

Two immersions $k_0, k_1:S^2\to \R^4$ are said to be \emph{equivalent} if there are orientation-preserving self-diffeomorphisms $h$ of $S^2$ and $H$ of $\R^4$, respectively, such that $k_1\circ h = H\circ k_0$. Denote the standard embedding $S^2\subset \R^4$ by $u_0^0$. By applying local cusp homotopies, $d$ of positive sign and $e$ of negative sign, to $u_0^0$, one obtains an \emph{unknotted} immersion, denoted $u_d^e:S^2\to \R^4$. Note that $u_d^e$ is unique up to equivalence; we say that an immersion $k:S^2\to \R^4$  (or its image) is unknotted if $k$ is equivalent to $u_d^e$ for some $d, e\geq 0$. See \cite{K} for more details. 

 Identify $S^4 = \R^4 \cup \{\infty\}$.
\begin{lemma}\label{lem:good}
A link map $f$ is link homotopic to a good link map $g$ such that $g(S^2_-)$ is unknotted in $\R^4\subset S^4$.
\end{lemma}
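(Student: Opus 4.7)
The plan is to prove the lemma in two stages: first link-homotope $f$ to a good link map $f_1$ using the standard construction already established, and then further link-homotope $f_1$ so that the negative component becomes unknotted in $\R^4 \subset S^4$ while preserving goodness. The first stage is immediate from the argument at the start of the proof of Theorem \ref{prop:liwelldefined} (perturb to self-transverse, apply local cusps, then apply finger moves to abelianize $\pi_1(X_\pm)$), so I focus on the second.

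Choose an identification $S^4 = \R^4 \cup \{\infty\}$ with $\infty \notin f_1(S^2_+) \cup f_1(S^2_-)$, so $f_1|_-$ is a self-transverse immersion $S^2_- \to \R^4$ with vanishing signed self-intersection. By the Smale--Hirsch classification of immersions of $S^2$ into $\R^4$, there is a regular homotopy in $\R^4$ from $f_1|_-$ to the unknotted immersion $u_d^d$ for an appropriate $d \ge 0$. The central technical task is to realize this regular homotopy inside $X_+ = S^4 \setminus f_1(S^2_+)$, so that together with keeping $f_1|_+$ fixed it constitutes a link homotopy of $f_1$.

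To accomplish this, decompose the regular homotopy into a finite sequence of ambient isotopies of $\R^4$ and local finger- and Whitney-moves on $f_1|_-$, each supported in a small 4-ball $B$. For a finger move along a chord $\gamma \subset \R^4$, the $\pi_1$-surjectivity argument from the proof of Lemma \ref{lem:kamada-2-modify} (with the roles of the two components interchanged) path-homotopes $\gamma$ through chords on $f_1(S^2_-)$ to a simple chord disjoint from $f_1(S^2_+)$, and Lemma \ref{lem:ambient} then relocates the move into $X_+$. For a Whitney move with Whitney disk $W$, each transverse interior intersection of $W$ with $f_1(S^2_+)$ is tubed off along a nearby meridian disk of $f_1(S^2_+)$ (such meridian disks are available because $\pi_1(X_+) \cong \Z$ is meridian-generated), yielding a Whitney disk supported in $X_+$ at the cost of introducing cancelling pairs of self-intersections of $f_1|_-$ that may be removed by subsequent Whitney moves. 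Ambient isotopies of $\R^4$ are handled by first pushing $f_1(S^2_+)$ out of the isotopy's compact support via a link homotopy of $f_1|_+$ in $X_- = S^4 \setminus f_1(S^2_-)$.

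After completing this modified regular homotopy, the resulting link map $g_0$ satisfies $g_0(S^2_-) = u_d^d(S^2)$, which is unknotted in $\R^4$; in particular $\pi_1(S^4 \setminus g_0(S^2_-)) \cong \Z$ holds automatically. The abelianness of $\pi_1(S^4 \setminus g_0(S^2_+))$ and the vanishing of the signed self-intersection of $g_0|_+$ may have been disturbed during the preceding moves, but both can be restored by a final round of finger moves and local cusp homotopies applied to $g_0|_+$ inside $X_- = S^4 \setminus g_0(S^2_-)$, as in the proof of Theorem \ref{prop:liwelldefined}. These final moves do not touch $g_0(S^2_-)$, so the image of the negative component of the resulting good link map $g$ remains unknotted in $\R^4$. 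The main obstacle is the Whitney-disk tubing step: one must verify that the Norman-style modification of each Whitney disk preserves the framing condition needed for a valid Whitney move, and that the resulting auxiliary self-intersection pairs of $f_1|_-$ can indeed be cancelled consistently within $X_+$.
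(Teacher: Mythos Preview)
Your two-stage plan is sound, but the second stage is far more laborious than necessary and contains a genuine gap. The paper's proof of this lemma is essentially one line: Kamada's \cite[Lemma~3]{K} asserts that \emph{any} self-transverse immersion $S^2\to\R^4$ can be changed into an unknotted immersion by finger moves along a family of disjoint chords. Since chords are $1$-dimensional and $f_1(S^2_+)$ is $2$-dimensional in a $4$-manifold, general position lets one assume these chords miss $f_1(S^2_+)$, so the finger moves take place in $X_+$ and constitute a link homotopy. No Whitney moves and no ambient isotopies of $\R^4$ are needed at all. Goodness survives automatically: $f_1(S^2_+)$ is untouched, finger moves add oppositely-signed double point pairs to $f_1(S^2_-)$ so its signed self-intersection stays zero, and the complement of an unknotted immersion has fundamental group $\Z$.

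The gap in your approach is precisely the Whitney-disk step you flagged. Your proposed fix---tubing $W$ along ``a nearby meridian disk of $f_1(S^2_+)$''---does not do what you want: a meridian disk to $f_1(S^2_+)$ by definition meets $f_1(S^2_+)$ once, so tubing $W$ to such a disk adds an intersection with $f_1(S^2_+)$ rather than removing one. The Norman trick for removing an intersection $x\in W\cap f_1(S^2_+)$ would require a $2$-sphere geometrically dual to $f_1(S^2_+)$, and no such sphere exists in $S^4$. Your handling of ambient isotopies is also unjustified: an isotopy of $\R^4$ arising in a generic regular-homotopy decomposition can have support containing all of $f_1(S^2_+)$, and there is no general mechanism to ``push $f_1(S^2_+)$ out'' of it via a link homotopy. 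The moral is that trying to realize an \emph{arbitrary} regular homotopy to $u_d^d$ inside $X_+$ is the wrong goal; Kamada's lemma gives you a very special regular homotopy (finger moves only) for which the transversality argument is trivial.
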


\begin{proof}
As in the proof of Theorem \ref{lem:reg-htpic}, we may assume after a link homotopy that $f$ is a good link map.
By \cite[Lemma 3]{K}, there is a family of disjoint chords attached to $f(S^2_-)$ such that finger moves along them change $f(S^2_-)$ into an unknotted immersion in $\R^4 = S^4 \rsetminus \{\infty\}$. As these chords may be assumed to miss $f(S^2_+)$, we have the required result.
\end{proof}

For an immersed 2-sphere $A$ in a 4-manifold $Y$, let $\omega_2(A)\in \Z_2$ denote the second Stiefel Whitney number of the normal bundle of $A$ in $Y$. The results in \cite{me1} readily generalize to give the following.

\begin{lemma}\label{lem:pi2}
Suppose $k:S^2\to \R^4$ is an unknotted, self-transverse immersion with $d$ double points, and let $Y$ denote the complement in $S^4$ of $k(S^2)$. Then $\pi_2(X_-)$ is a free $\Z[\Z]$-module on $d$ generators, the Hurewisc map $\pi_2(Y)\to H_2(Y)$ surjects  and $\omega_2(A)=0$ for any immersed 2-sphere $A$ in $Y$.
\end{lemma}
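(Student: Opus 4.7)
The plan is to show $Y \simeq S^1 \vee \bigvee_{i=1}^{d} S^2$, from which the first two conclusions follow by standard arguments; the claim $\omega_2(A)=0$ I will handle separately as a bundle-theoretic observation.

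For the homotopy type, I would argue by induction on $d$, generalizing \cite{me1}. The base case $d = 0$ is the standard embedding $u_0^0$, whose complement in $S^4$ is $D^3 \times S^1 \simeq S^1$. In the inductive step, I would introduce one further local cusp homotopy in a $4$-ball $B$ disjoint from the previous double points, so that $k(S^2) \cap B$ consists of two transverse $2$-disks in $B \cong D^2 \times D^2$ whose complement in $B$ deformation retracts onto the Clifford torus $T^2 = \partial D^2 \times \partial D^2$. Both meridional circles $\delta_1, \delta_2 \subset T^2$ represent the meridian of $k(S^2)$ in $\pi_1(Y) \cong \Z$, so the loop $\delta_1 \overline{\delta_2}$ bounds a disk in $Y$; the surgery construction of Section \ref{surger} then upgrades $T^2$ to a $2$-sphere that provides a new $\Z[\Z]$-generator of $\pi_2(Y)$. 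A Mayer--Vietoris / van Kampen argument using this decomposition should then show that the cusp preserves $\pi_1(Y) = \Z$ and adds precisely one free $\Z[\Z]$-summand to $\pi_2(Y)$, completing the induction.

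Granted the homotopy type, the universal cover $\tilde Y$ is homotopy equivalent to $\bigvee_{n \in \Z,\, 1 \le i \le d} S^2$, so Hurewicz on $\tilde Y$ gives $\pi_2(Y) = \pi_2(\tilde Y) \cong H_2(\tilde Y) \cong \Z[\Z]^d$ as a $\Z[\Z]$-module. For the Hurewicz surjection onto $H_2(Y)$ I would invoke the Cartan--Leray spectral sequence for $\tilde Y \to Y$: using $H_i(\Z) = 0$ for $i \ge 2$ and $H_1(\tilde Y) = 0$, it degenerates to give $H_2(Y) \cong H_2(\tilde Y)_{\Z}$, and the Hurewicz map $\pi_2(Y) \to H_2(Y)$ factors as the canonical surjection $H_2(\tilde Y) \twoheadrightarrow H_2(\tilde Y)_{\Z}$.

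Finally, for $\omega_2(A) = 0$: since $Y$ is open in the spin manifold $S^4$, the tangent bundle $TY$ has $w_2(TY) = 0$. For any immersion $A: S^2 \to Y$ with normal bundle $\nu$, the splitting $TS^2 \oplus \nu \cong A^\ast TY$, together with $w_1(TS^2) = 0$ and $w_2(TS^2) = 0$ (the Euler number of $TS^2$ is $2$), yields $w_2(\nu) = A^\ast w_2(TY) = 0$. The main obstacle I anticipate is the Mayer--Vietoris bookkeeping in the inductive step, particularly the identification of $\delta_1$ and $\delta_2$ with the meridian and the resulting collapse onto $S^1 \vee \bigvee S^2$; once that is nailed down, everything else is routine.
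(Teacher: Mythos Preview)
Your arguments for Hurewicz surjectivity (via Cartan--Leray) and for $\omega_2(A)=0$ (via $w_2(TS^4)=0$ and the Whitney sum formula) are correct and in fact more explicit than what the paper supplies. The difference, and the one genuine gap, is in your inductive computation of the homotopy type.

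The paper does not induct on $d$. Instead it asserts directly (citing \cite{me1}) that the exterior of $k(S^2)$ in $S^4$ admits a handle decomposition with one $0$-handle, one $1$-handle, and $d$ zero-framed $2$-handles attached along unknotted, nulhomotopic circles in the boundary $S^1\times S^2$ of the $0$- and $1$-handle union. From this the homotopy type $S^1\vee\bigvee_{i=1}^d S^2$ is immediate, and all three conclusions follow without any Mayer--Vietoris bookkeeping.

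In your inductive step you conflate two different $4$-balls. If $B$ is the ball in which the cusp homotopy takes place, then after the cusp $k(S^2)\cap B$ is a \emph{single connected} immersed disk with one double point, not two transverse disks; its complement in $B$ is not simply the Clifford torus complement $D^2\times D^2\setminus(D^2\times\{0\}\cup\{0\}\times D^2)$. The ``two transverse disks'' picture is valid only in a \emph{smaller} ball $B'\subset B$ about the new double point, but then $S^4\setminus B'$ does not carry the unknotted immersion with $d-1$ double points, so the inductive hypothesis does not apply to it. To make your induction go through you would have to compute the homotopy type of the complement in $B$ of the cusped disk (it is $S^1\vee S^2$, but this requires its own argument) and then carry out the van Kampen/Mayer--Vietoris gluing along $\partial B\setminus\partial D\simeq S^1\times D^2$. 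This is all doable, but it is exactly the ``bookkeeping'' you flagged, and as written the step is both misstated and incomplete. The handle picture in the paper's remark sidesteps this entirely.
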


\begin{remark}
Indeed, the complement of an open tubular neighborhood of $k(S^2)$ in $S^4$ has a handlebody decomposition consisting of one 0-handle, one 1-handle, and $d$ zero-framed 2-handles attached along unknotted circles in $S^3$ which are nulhomotopic in the boundary of the union of the $0$- and $1$-handle.
\end{remark}

Let $k:S^2\to Y$ be a self-transverse immersion and suppose $p$ is a double point of $k(S^2)$ 
An \emph{accessory circle} for $p$ is an (oriented) simple circle on $k(S^2)$ that passes through exactly one double point, $p$,  and changes sheets there.

\begin{lemma}\label{lem:2-sphere-generators}
Let $f$ be a good link map such that $f(S^2_-)$ is unknotted in $\R^4\subset S^4$. Equip $f(S^2_+)$ with a whisker in $X_-$ and fix an identification of $\pi_1(X_-)$ with $\Z\langle s\rangle$ so as to write $\Z[\pi_1(X_-)] = \Z[s,s^{-1}]$.  Label the double points of $f(S^2_-)$ by $\{p_i\}_{i=1}^d$ and choose an accessory circle  $\alpha_i$ for $p_i$ for each $1\leq i\leq d$. Then $\pi_2(X_-)\cong (\hskip -0.03cm\underset{i=1}{\overset{d}{\oplus}}\Z)[s,s^{-1}]\mycomment{(\Z[\Z])^d}$ and there is a $\Z[s,s^{-1}]$-basis represented by self-transverse, immersed, whiskered 2-spheres $\{A_i\}_{i=1}^d$ in $X_-$ with the following properties. For each $1\leq i\leq d$, there is an integer Laurent polynomial $q_i\in \Z[s,s^{-1}]$ such that
\[
    \lambda(f(S^2_+), A_i) = (1-s)^2q_i(s)
\]
and $q_i(1) = \lk(f(S^2_+),\alpha_i)$. Moreover, if for any $1\leq j\leq d$ the loop $\alpha_j$ bounds a 2-disk in $S^4$ that intersects $f(S^2_+)$ exactly once, then we may choose $A_j$ so that
\[
    \lambda(f(S^2_+), A_j) = (1-s)^2.
\]
\end{lemma}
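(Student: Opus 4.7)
The plan is to derive the $\Z[s,s^{-1}]$-module structure of $\pi_2(X_-)$ from the handle decomposition noted in the remark after Lemma~\ref{lem:pi2}, construct the generators $A_i$ geometrically from the accessory circles $\alpha_i$ via surgeries on auxiliary surfaces, and compute each $\lambda(f(S^2_+), A_i)$ by iterated application of Lemma~\ref{lem:lambda-surgery}.

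The first assertion, that $\pi_2(X_-)$ is free of rank $d$ over $\Z[s,s^{-1}]$, is provided directly by Lemma~\ref{lem:pi2}; a handle-theoretic basis arises from the cocores of the $d$ zero-framed $2$-handles capped off in $S^4$. For each $1\leq i\leq d$, I would then construct $A_i$ as follows. Since $S^4$ is simply connected, $\alpha_i$ bounds an immersed $2$-disk $E_i$ in $S^4$ which (after a transverse perturbation) meets $f(S^2_+)$ in $\lk(f(S^2_+),\alpha_i)$ signed points. A thin tubular neighborhood of $\alpha_i$ in $f(S^2_-)$, pushed off in the normal direction of $f(S^2_-)$ and patched near $p_i$ with a portion of the Clifford torus at the double point, provides an embedded torus (or punctured torus) $T_i\subset X_-$ containing $E_i$'s boundary pushoff as one of its generating curves $\delta_1$; the other generator $\delta_2$ is a meridian of $f(S^2_-)$ at a point of $\alpha_i\setminus\{p_i\}$ and represents the generator $s$ of $\pi_1(X_-)$.

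Using that $\alpha_i$ swaps sheets at $p_i$, a direct computation shows that $\delta_1$ also represents $s^{\pm 1}$ in $\pi_1(X_-)$, so that an appropriate diagonal curve on $T_i$ is nullhomotopic. I would then perform two surgeries to convert $T_i$ into an immersed $2$-sphere $A_i$: the first surgery kills one independent $H_1$-class via a nullhomotopy disk related to $E_i$, and the second kills the remaining class (nullhomotopic since $\pi_1(X_-)\cong\Z$). Each surgery has a dual curve on $T_i$ representing $s\in\pi_1(X_-)$, so by Lemma~\ref{lem:lambda-surgery} each contributes a factor of $(1-s)$. The resulting expression $\lambda(f(S^2_+), A_i)=(1-s)^2 q_i(s)$ holds with $q_i(s)\in\Z[s,s^{-1}]$ encoding the intersections of $f(S^2_+)$ with the pre-surgery disks. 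That $\{A_i\}$ freely generates $\pi_2(X_-)$ is checked by matching their images under the Hurewicz surjection of Lemma~\ref{lem:pi2} with the cocores of the $2$-handles.

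For $q_i(1)=\lk(f(S^2_+),\alpha_i)$, I would specialize at $s=1$: under the augmentation $\Z[s,s^{-1}]\to\Z$, the pre-surgery disks assemble into a $2$-chain in $S^4$ bounded by $\alpha_i$, whose algebraic intersection with $f(S^2_+)$ is $\lk(f(S^2_+),\alpha_i)$ by definition. The moreover clause then follows immediately: if $\alpha_j$ bounds a $2$-disk in $S^4$ meeting $f(S^2_+)$ in exactly one point, this disk serves as $E_j$ itself, forcing $q_j(s)=1$ and $\lambda(f(S^2_+),A_j)=(1-s)^2$. The main obstacle will be the careful bookkeeping in the construction of $T_i$ to ensure that both surgery curves are nullhomotopic in $X_-$ \emph{and} have dual curves representing precisely $s$ (rather than some higher power), yielding the clean $(1-s)^2$ factorization; a secondary concern is tracking orientations and whiskers so that the constant term of $q_i$ at $s=1$ has the correct sign to match the linking number.
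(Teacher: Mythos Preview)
Your overall strategy matches the paper's: build generators from linking tori at the double points $p_i$, surger using disks whose dual curves are meridians of $f(S^2_-)$, and apply Lemma~\ref{lem:lambda-surgery} twice to extract the factor $(1-s)^2$; the identification of $q_i(1)$ with $\lk(f(S^2_+),\alpha_i)$ and the basis check via the Hurewicz surjection of Lemma~\ref{lem:pi2} are also the paper's.

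There is, however, a genuine gap in your surgery bookkeeping. A torus becomes a sphere after \emph{one} surgery, so ``two surgeries on $T_i$'' cannot be what yields $(1-s)^2$; and your capping disk $E_i$ lies in $S^4$, not in $X_-$, so it cannot be used directly in Lemma~\ref{lem:lambda-surgery}. In the paper the two factors come from a \emph{nested} construction. The linking torus $T_i$ carries a curve $\delta$ that bounds an embedded disk $\Delta$ in $S^4$ meeting $f(S^2_-)$ (not $f(S^2_+)$) in two points; tubing these intersections off along $f(S^2_-)$ replaces $\Delta$ by an embedded \emph{punctured torus} $\Theta\subset X_-$ with $\partial\Theta=\delta$. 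On $\Theta$ one finds a curve $\beta$ (after band-summing with meridians of $f(S^2_-)$ to force it nullhomotopic in $X_-$) which is freely homotopic to $\alpha_i$ in $S^4\setminus f(S^2_+)$ and hence bounds an immersed disk $D_i\subset X_-$ with $f(S^2_+)\cdot D_i=\lk(f(S^2_+),\alpha_i)$. The first application of Lemma~\ref{lem:lambda-surgery} surgers $\Theta$ along $\beta$ (dual curve a meridian) to produce a \emph{disk} $\widehat A_i$ with $\lambda(f(S^2_+),\widehat A_i)=(1-s)\widehat q_i$; the second surgers $T_i$ along $\delta$ using $\widehat A_i$ as the capping disk (dual curve again a meridian) to give the sphere $A_i$. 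The passage from $E_i\subset S^4$ to a disk in $X_-$ is precisely where the extra genus, and hence the second $(1-s)$, enters---this step is missing from your sketch. The ``moreover'' clause likewise needs the geometric hypothesis $|f(S^2_+)\cap D_j|=1$ on the disk in $X_-$, not merely on a disk in $S^4$.
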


\begin{proof} 
For $t_0, t_0'\in \R$, $t_0'>t_0$, let $\R^3[t_0]$ denote the hyperplane of $\R^4$ whose fourth coordinate $t$  is $t_0$, and let $\R^3[t_0,t_0'] = \{(x,t)\in \R^4: x\in \R^3, t_0\leq t\leq t_0'\}$.  Figure \ref{fig:model} gives a ``moving picture'' description of an immersed 2-disk $U$ (appearing as an arc in each slice $\R^3[t_0]$, $t_0\in [-1,1]$) in a 4-ball $N\subset \R^3[-1,1]$, with a single self-transverse double point $p\subset \R^3[0]$. In this figure we have labeled a loop $\alpha\subset\R^3[0]$ on $U$ that changes sheets at $p$ and bounds a 2-disk $D$. For each $1\leq i\leq d$, let $\widehat U_i$ be a 2-disk on $S^2_-$ that contains the two preimages of the double point $p_i$, and no other double point preimages.  There is a diffeomorphism $\Gamma_i$ of $N$ onto a 4-ball neighborhood of $p_i$ in $S^4$ that takes $U$ to $f(\widehat U_i)$, $\alpha$ to $\alpha_i$ and $p$ to $p_i$. Choose a 4-ball neighborhood $N^+\subset N$ of $p$ so that (the smaller 4-ball) $\Gamma_i(N^+)$  is disjoint from $f(S^2_+)$. 
\newlength{\myheight}%
\setlength{\myheight}{0.25\textwidth}%
\begin{figure}[h]
\centering
    \includegraphics[height=\myheight]{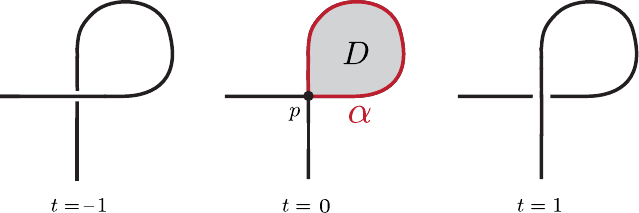}
    \caption{}
    \label{fig:model}
\end{figure}%
There is a torus $T$ in $N^+\rsetminus U$ that intersects $D$ exactly once; see Figure \ref{fig:model-with-torus}. The torus appears as a cylinder in each of $\R^3[-1]$ and $\R^3[1]$, and appears as a pair of circles in $\R^3[t_0]$ for $t_0\in (-1,1)$.
\begin{figure}[h]
\centering
    \includegraphics[height=1.2\myheight]{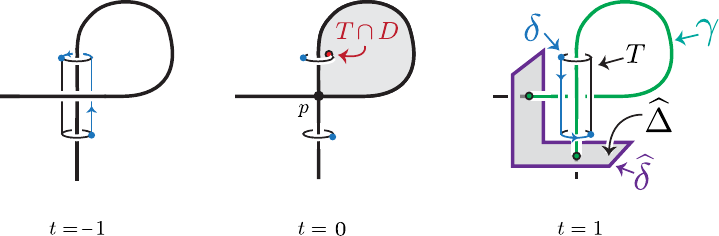}
    \caption{}
    \label{fig:model-with-torus}
\end{figure}
By Alexander duality the linking pairing
\[
    H_2(X_-)\times H_1(f(S^2_-))\to \Z
\]
defined by $(R,\upsilon)\mapsto R\cdot\Upsilon$, where $\upsilon=\d \Upsilon\subset S^4$, is nondegenerate. Thus, as the loops $\{\alpha_i\}_i$ represent a basis for $H_1(f(S^2_-))\cong \Z^{d}$, we have that $H_2(X_-) \cong \Z^{d}$ and (after orienting) the so-called \textit{linking tori}  $\{T_i\}_i$, defined by $T_i=\Gamma_i(T)$, represent a basis.  
We proceed to apply the construction of \eqref{surger} (twice, successively) to turn these tori into 2-spheres.

In Figure \ref{fig:model-with-torus} we have illustrated an oriented circle $\delta$ on $T$ which intersects $\R^3[-1]$ and $\R^3[1]$ each in an arc, and appears as a pair of points in $\R^3[t_0]$ for $t_0\in (-1,1)$. 
Notice that $\delta$ is isotopic in $N^+\rsetminus U$ to the circle $\widehat \delta\subset \R^3[1]$ that is also illustrated in Figure \ref{fig:model-with-torus}. By attaching the trace of such an isotopy to the 2-disk $\widehat \Delta\subset \R^3[1]$ illustrated, bound by $\widehat \delta$, one may obtain an embedded 2-disk $\Delta\subset N^+$ that is bound by $\delta$ and intersects $U$ precisely where $\widehat \Delta$ does. 
These intersection points are the endpoints of an arc $\gamma\subset \R^3[1]$ on $U$, shown in Figure \ref{fig:model-with-torus}. Let $\gamma_i = \Gamma_i(\gamma)\subset f(S^2_-)$.  In  Figure \ref{fig:tubular-nbd} we have illustrated in $\R^3[1]$  the restriction of a tubular neighborhood of $U$  to $\gamma$. Identifying this tubular neighborhood with $\gamma\times D^2$, we may assume the embedding $\Gamma_i$ carries $\gamma\times D^2$  onto the restriction over $\gamma_i$ of a tubular neighborhood of $f(S^2_-)$ that  is disjoint from $f(S^2_+)$.
\begin{figure}%
    \centering
    \subfloat[]{\label{fig:tubular-nbd}\includegraphics[height=1.3\myheight]{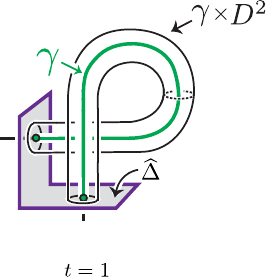}}%
    \hskip 1cm
    \subfloat[]{\label{fig:tubular-nbd-with-beta}\includegraphics[height=1.3\myheight]{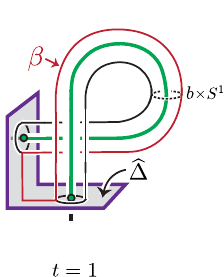}}%
    \caption{}%
    \label{}%
\end{figure}
Let $\Theta$ be the punctured torus in $N\rsetminus  U$ given by
\[
    \Theta = \Delta \rsetminus (\d \gamma\times \int D^2) \mycup{\d\gamma\times S^1}{} (\gamma\times S^1),
\]
which has boundary $\delta$. Note that $\Gamma_i(\Theta)$ is disjoint from $f(S^2_+)$. Form a loop $\beta$ on $\R^3[1]\cap  \Theta$ by connecting the endpoints of $\gamma\times \{1\}$ by an arc on $\widehat \Delta \rsetminus (\d \gamma\times \int D^2)$.  Since $\pi_1(X_-)\cong \Z$ and a loop of the form $\Gamma_i(\{b\}\times S^1)$ ($b\in \int \gamma$) is meridinal to $f(S^2_-)$, by replacing $\gamma\times \{1\}\subset \beta$ by its band sum with oriented copies of $\{b\} \times S^1$ if necessary (see Figure \ref{fig:tubular-nbd-with-beta}) we may assume that $\beta_i=\Gamma_i(\beta)$ is a simple circle bounding an immersed, self-transverse 2-disk $D_i$ in $X_-$ that is transverse to $f(S^2_+)$.

Now, as $f(S^2_+)$ misses $\Gamma_i(N^+)$, the loop $\beta_i$ is freely homotopic in $S^4\rsetminus f(S^2_+)$ to $\alpha_i$.  Consequently,
\begin{equation}\label{eq:f-D-i}
    |f(S^2_+)\cdot D_i| = |\lk(f(S^2_+), \alpha_i)|
\end{equation}
as non-negative integers. Let $\widehat A_i$ be the immersed, self-transverse 2-disk in $X_-$ obtained by performing the construction of \eqref{surger} with the (embedded) punctured torus $\Gamma_i(\Theta)\subset X_-\rsetminus f(S^2_+)$, $\beta_i\subset \Gamma_i(\Theta)$, $D_i$ and some choice of normal vector field to $\beta_i$ on $\Gamma_i(\Theta)$.
Then, since a loop of the form $\Gamma_i(\{b\}\times S^1)$ ($b\in \int \gamma$) is dual to $\beta_i$ on $\Gamma_i(\Theta)$ and hence represents a generator ($s$ or $s^{-1}$) of $ \pi_1(X_-)$, by Equation \eqref{eq:f-D-i} and Lemma \ref{lem:lambda-surgery} we have (after orienting $\widehat A_i$ and connecting it to the basepoint of $X_-$)
\begin{equation}\label{eq:f-hat-A-i}
    \lambda(f(S^2_+), \widehat A_i) =(1-s)\widehat q_i(s)
\end{equation}
for some integer Laurent polynomial $\widehat q_i\in \Z[s,s^{-1}]$ such that $\widehat q_i(1)=\lk(f(S^2_+), \alpha_i)$. Moreover, if $| f(S^2_+) \cap D_j|=1$ for some $j$ then (since we are free to choose the orientation and whisker of $\widehat A_j$)  we may take $\widehat q_j=1$. %

Now, for each $i$, $\widehat A_i$ is bound by the circle $\Gamma_i(\delta)$ on the (embedded) linking torus $T_i\subset X_-\rsetminus f(S^2_+)$. Perform the construction of \eqref{surger} with $T_i$, $\Gamma_i(\delta)$, $\widehat A_i$ and some choice of normal vector field to $\Gamma_i(\delta)$ on $T_i$.  Then, since $\delta$  has a dual curve on $T$ that is meridinal to $U$,  by  Equation \eqref{eq:f-hat-A-i} and Lemma \ref{lem:lambda-surgery} we have (after orienting $A_i$ and connecting it to the basepoint of $X_-$)
\begin{equation*}
    \lambda(f(S^2_+), A_i) =(1-s)^2 q_i(s)
\end{equation*}
for some $q_i\in \Z[s,s^{-1}]$ such that $q_i(1)=\lk(f(S^2_+), \alpha_i)$. As above, if  $| f(S^2_+) \cap D_j|=1$ for some $j$ then  we may take $q_j=1$.

By construction, $A_i$ is homologous to $T_i$  for each $i$, so by Lemma \ref{lem:pi2} the immersed 2-spheres $\{A_i\}_{i=1}^d$ represent a $\Z[s,s^{-1}]$-basis  for $\pi_2(X_-)$.\qedhere
\mycomment{, It is a well known result that the complement of a self-transverse, immersed 2-sphere in $S^4$ (or, equivalently, a self-transverse, properly immersed 2-disk in $D^4$) has a cellular decomposition consisting of one $0$-cell, $1$-cell and $d$ $2$-cells. From this the hypothesis that $\pi_1(X_-)\cong \Z$ implies that $\pi_2(X_-)=(\Z[\Z])^{d}$. See, for example, \cite{me1}. To see that the collection $\{A_i\}_i$ are generators, one notes the exact sequence
\[
    \pi_2(X_-)\xrightarrow{\text{Hurewicz}} H_2(X_-)\to H_2(\pi_1(X_-))=0.\qedhere
\]}

\subsection[\texorpdfstring{The invariant $\tau$ applied to link maps}%
                        {The invariant tau applied to link maps}]{The invariant $\tau$ applied to link maps}

In \cite{ST}, the authors define a homotopy invariant $\tau$ which takes as input a map $k:S^2\to Y^4$ with vanishing Wall self-intersection $\mu(k)$  and gives output in a  quotient $\Pi(Y, k)$ of the group ring $\Z[\pi_1(Y)\times \pi_1(Y)]$ modulo certain relations. The relations are additively generated by the equations
\begin{align}
    (a,b)&=-(b,a)\tag{$\cR_1$}\\
    (a,b)&=-(a^{-1}, ba^{-1})\tag{$\cR_2$}\\
    (a,1)&=(a,a)\tag{$\cR_3$}\\
    (a,\lambda(k(S^2),A))&=(a,\omega_2(A)\cdot 1)\label{eq:omega-2-relation}\tag{$\cR_4$}
\end{align}
where $a,b\in \pi_1(Y)$, $A$ represents an immersed $S^2$ or $\R \P^2$ in $Y$ (in the latter case, the group element $a$ is the image of the nontrivial element in $\pi_1(\R\P^2)$).

Let $f$ be a good link map with $\sigma_+(f)=0$ (from which it follows that $\mu(f_+)=0$). For an integer $k$, let $\overline{k}$ denote its image in $\Z_2$. Letting $\rho$ denote the mod $2$ Hurewicz  map $\pi_1(X_-)\to H_1(X_-;\Z_2)=\Z_2$, define a ring homomorphism $\varphi_f: \Pi(X_-, f_+) \to \Z_2\langle t: t^2=1\rangle$ by
\[
    (a,b)\mapsto t^{\overline{\rho(a)+\rho(a)\rho(b)+\rho(b)}},
\]
and extending linearly mod $2$. We now prove a stronger form of Theorem \ref{thm:omega-equals-li}.

\begin{lemma}
Let $f$ be a link map with $\sigma_+(f)=0$. After a certain link homotopy of $f$ we have that  $\varphi_f$ is an isomorphism and takes $\tau(f_+)$ to $(1+t)\hskip0.03cm\omega_+(f)$.
\end{lemma}

\mycomment{
\begin{lemma}[{{\cite[Proposition 4.3]{me1}}}]\label{lem:newreln} 
If $Y$ is the complement in $S^4$ of an unknotted, self-transverse 2-sphere, then the relations \eqref{eq:omega-2-relation} may be replaced by the relations additively generated by the equations
\begin{align}
    (a,\lambda(k,A)) = (0,0)\tag{$\cR_4'$}
\end{align}
for $a\in\pi_1(Y)$ and $A\in \pi_2(Y)$.\qedhere
\end{lemma}
}

\begin{proof}
By Lemma \ref{lem:good} we may assume $f$ is a good link map (and so $\mu(f_+)=\sigma_+(f)=0$) such that $f(S^2_-)$ is unknotted. We may perform a finger move of $f(S^2_-)$ along an chord attached in the complement of and meridinal to $f(S^2_-)$ so that a slice in $\R^3[t_0]$ (for some $t_0$) of the result  is illustrated in Figure \ref{fig:finger-moved-dbl-pt}. This produces a pair of oppositely-signed double points $\{p^+, p^-\}$ on $f(S^2_-)$ such that (in particular) $p^+$ has an accessory circle bounding an obvious embedded 2-disk in $\R^3[t_0]$ that intersects $f(S^2_+)$ exactly once. Note that $f(S^2_-)$ is still unknotted (by \cite[Lemma 1]{K}) and, in particular, its complement in $S^4$ still has abelian fundamental group.

Now, fixing $f_-$ and $X_-$, by Lemma \ref{lem:2-sphere-generators} we may thus identify $\pi_1(X_-)$ with $\Z\langle s\rangle$ and $\pi_2(X_-)$ with $(\hskip -0.03cm\underset{i=1}{\overset{d}{\oplus}}\Z)[s,s^{-1}]$, for some $d\geq 0$, such that  there is an immersed,   whiskered 2-sphere $A_0$ in $X_-$ with the property that $\lambda(f(S^2_+), A_0)=(1-s)^2$. Moreover,  for any whiskered, immersed 2-sphere $A$ in $X_-$ we have $\lambda(f(S^2_+), A)=(1-s)^2q_A(s)$ for some integer Laurent polynomial $q_A\in \Z[s,s^{-1}]$.
\begin{figure}[h]
\centering
    \includegraphics[width=.30\textwidth]{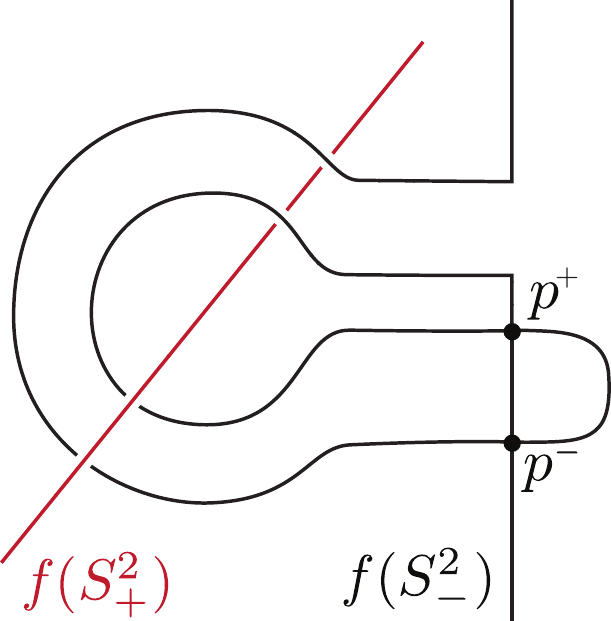}
    \caption{}
    \label{fig:finger-moved-dbl-pt}
\end{figure}
%
%
\mycomment{
Now, from the proof of Lemma \ref{lem:2-sphere-generators} we see that  $H_2(X_-)$ is generated by linking tori which have vanishing homological self-intersection. It follows from the Wu formula that the second Stiefel Whitney class of the tangent bundle $TX_-$ vanishes. Thus, as the tangent bundle of $S^2$ has vanishing Stiefel Whitney numbers, the Cartan formula implies that for any immersion $h:S^2\to X_-$ one has $\omega_2(A) = h^\ast\omega_2(TX_-) = 0$. \mycomment{ Or cite (c.f. \cite{me1} Proposition 4.3) }
}

Therefore, if we identify $\Z[\pi_1(X_-)\times \pi_1(X_-)]$ with $\Z[s^{\pm 1},t^{\pm 1}]$ via $(s^n,s^m)=s^nt^m$ (for $n, m\in \Z$), by Lemma \ref{lem:pi2} the  ring $\Pi(X_-,f_+)$ is the quotient of  the group ring $\Z[s^{\pm 1},t^{\pm 1}]$ modulo the relations  generated additively by the equations:
\begin{align}
    s^{n}t^{n} - s^{n} &= 0\tag{$\cT_1$}\label{reln1}\\
    s^{n}t^{m}+s^{-n}t^{m-n} &= 0\tag{$\cT_2$}\label{reln2}\\
    s^{n}t^{m}+s^{m}t^{n}&=0\tag{$\cT_3$}\label{reln3}\\
    s^{n}t^m(1-t)^2&=0\tag{$\cT_4$}\label{reln4}
\end{align}
where $n, m\in \Z$. Note that in reformulating Relation \eqref{eq:omega-2-relation} to obtain  Relation \eqref{reln4} we have used the action of $\pi_1(X_-)=\Z\langle s\rangle$ on $\pi_2(X_-)$.

Let $\equiv$ denote equivalence in $\Pi(X_-,f_+)$. Clearly $\varphi_f$ is surjective; to show injectivity we first show that for any integers $n, m$, one has
\begin{align}\label{reln-big}
    s^nt^m \equiv t^{\overline{n+nm+m}}.
\end{align}
By Relations \eqref{reln2} and \eqref{reln3} we have $2t^n\equiv 0$ and hence $2s^n\equiv -2t^n\equiv 0$ for each $n\in \Z$.  Then Relation \eqref{reln4} implies that $t^{m+2}\equiv t^m$ for any integer $m$, and it follows by an induction argument that
\begin{align}\label{tm}
    t^m\equiv t^{\overline{m}}.
\end{align}
Now, $s\equiv -t\equiv t$ and $st\equiv t$ by Relations \eqref{reln1}-\eqref{reln3}. Combining these equivalences with the consequence of Relation \eqref{reln4} that $st^{m+2}\equiv 2st^{m+1}-st^m$, an induction gives
\begin{align}\label{stm}
    st^m\equiv t
\end{align}
for any integer $m$.

Finally, fix $n_0\in \Z$.  By Relation \eqref{reln3} and Equivalences \eqref{tm} and \eqref{stm},  we have $s^{n_0}\equiv -t^{n_0} \equiv t^{\overline{n_0}}$ and $s^{n_0}t \equiv -st^{n_0} \equiv -t \equiv t$. 
Suppose now that for some $k\geq 1$ Equivalence \eqref{reln-big}  holds for $n=n_0$ and any $m\in\{0,1,\ldots, k\}$. Then Relation \eqref{reln4} implies that
\begin{align*}
s^{n_0}t^{k+1}&\equiv 2s^{n_0}t^{k}-s^{n_0}t^{k-1}\\
&\equiv 2t^{\overline{{n_0}+{n_0}k+k}}-t^{\overline{{n_0}+{n_0}(k-1)+(k-1)}}\\
&\equiv t^{\overline{{n_0}+{n_0}(k+1)+(k+1)}}.
\end{align*}
On the other hand, suppose that for some $k\leq 0$ Equivalence \eqref{reln-big} holds for $n=n_0$ and any $m\in\{k, k+1, \ldots, 0, 1\}$; then
\begin{align*}
s^{n_0}t^{k-1}&\equiv 2s^{n_0}t^{k}-s^{n_0}t^{k+1}\\
&\equiv 2t^{\overline{{n_0}+{n_0}k+k}}-t^{\overline{{n_0}+{n_0}(k+1)+(k+1)}}\\
&\equiv t^{\overline{{n_0}+{n_0}(k-1)+(k-1)}}.
\end{align*}
Thus, by induction Equivalence \eqref{reln-big} holds for $n=n_0$ and any integer $m$. But $n_0\in \Z$ was arbitrary, so the equivalence holds for all integers $n, m$.  As $2\equiv 0\equiv 2t$, we deduce that $\Pi(X_-, f_+)$ is the group ring $\Z_2\langle t: t^2=1\rangle$ and $\varphi_f$ is injective.

Turning to the second part of the lemma, we refer the reader to \cite{Li97} and \cite{ST} for detailed descriptions of the $\omega$ and $\tau$ invariants, respectively, and to \cite{FQ} for background on framed Whitney disks. We make only a few summarizing remarks.

Since $\sigma_+(f)=0$, $\pi_1(X_-)=\Z\langle s\rangle$, and  $f_+$ is self-transverse with vanishing signed sum of its double points, the double points of $f(S_+^2)$  may be decomposed into \emph{canceling} pairs $\{p_i^+,p_i^-\}_{i=1}^k$ in the following sense.  For each $1\leq i\leq k$, one has $\sign(p_i^+)=-\sign(p_i^-)$ and the preimages of $p_i^\pm$ in $S^2_+$ may be labeled $\{x_i^\pm, y_i^\pm\}$ so that if $\gamma_i$ is an arc on $S^2_+$ connecting  $x_i^+$ to $x_i^-$ (and missing all other double point  preimages) and $\gamma_i'$ is an arc on $S^2_+$ connecting  $y_i^+$ to $y_i^-$ (and missing $\gamma_i$ and all other double point  preimages),  then the loop $f(\gamma_i)\cup f(\gamma_i')\subset f(S^2_+)$ is nulhomotopic in $X_-$. The arcs $\{\gamma_i,\gamma_i'\}_{i=1}^k$ may be chosen  so that the resulting \emph{Whitney circles} $\{f(\gamma_i\cup \gamma_i')\}_{i=1}^k$ are mutually disjoint, simple circles in $X_-$ such that each bounds an immersed, framed Whitney disk $W_i$ in $X_-$ whose interior is transverse to $f(S^2_+)$. Let $\alpha_i^\pm$ be an arc on $S^2_+$ connecting $x_i^\pm$ to $y_i^\pm$, and let $n_i^\pm$ denote the integer $\lk(f(S^2_-), f(\alpha_i^\pm))$; then  $n_i^+=-n_i^-$. (In \cite{Li97} the non-negative integer $|n_i^+|$ is called the \emph{$n$-multiplicity} for the pair $\{p_i^+, p_i^-\}$, and in \cite{ST} the $\pi_1(X_-)$-element $s^{n_i^+}$ is called the \emph{primary group element} for $W_i$.) Note that $\rho(s^{n_i^+})$ is the mod $2$ image of $n_i^+$.

\mycomment{We will  refer to a neighborhood in $f(S^2_+)$ of $f(\gamma_i)$ (resp. $\gamma_i''$) as the \emph{positive} (resp. \emph{negative}) \emph{sheet} of $g(S^2_+)$ near $W_i$.}%
\mycomment{ In my translation, since $n_i^+$ goes from neg to pos sheet in \cite{ST}, so $\gamma_i$ is neg arc and $\gamma_i'$ is pos arc}
Let $i\in \{1,2,\ldots, k\}$ and suppose $x\in f(S^2_+) \cap \int W_i$. A loop that first goes along $f(S^2_+)$ from its basepoint to $x$, then along $W_i$ to $f(\gamma_i')\subset \d W_i$, then back along $f(S^2_+)$ to the  basepoint of $f(S^2_+)$, determines a $\pi_1(X_-)$-element $s^{m_x}$ (called the \emph{secondary group element} associated to $x$ in \cite{ST};   the non-negative integer $|m_x|$ is called the $m$-\emph{multiplicity} of $x$ in \cite{Li97}). %
Associate to $x$ a sign by  orienting $W_i$ using the following convention: orient $\d W_i$ from $p_i^-$ to $p_i^+$ along the $f(\gamma_i')$, then back to $p_i^-$ along $f(\gamma_i)$; the positive tangent to $\d W_i$ together with an outward-pointing second vector then orient $W_i$.
\mycomment{
Since  the positive and negative sheets meet transversely at $p_i^\pm$, there are a pair of smooth vector fields $v_1, v_2$ on $\d W_i$ such that $v_1$ is tangent to $f(S^2_+)$ along $\alpha_i$ and normal to $f(S^2_+)$ along $\beta_i$, while $v_2$ is normal to $f(S^2_+)$ along $\alpha_i$ and tangent to $f(S^2_+)$ along $\beta_i$. Such a pair defines a normal framing of $W_i$ on the boundary. We say that $\{v_1,v_2\}$ is a \emph{correct framing} of $W_i$, and that $W_i$ is \emph{framed}, if the pair extends to a normal framing of $W_i$.
}
Let
\[
    J^i_x= \overline{n_i^+ + n_i^+m_x + m_x} \in \Z_2
\]
and
\[
I^i_x = \sign(x) s^{n_i^+}t^{m_x} \in \Z[s^{\pm 1},t^{\pm 1}].
\]
Then Li's $\Z_2$-valued $\omega_+$-invariant applied to $f$  is defined by
\[
    \omega_+(f) = \mysum{i=1}{k}\; \mysum{x\, \in\,  f(S^2_+)\,\cap \,\int W_i}{} J^i_x \mod 2;
\]
while, in this special case, the Schneiderman-Teichner invariant $\tau$ applied to $f_+$ is given by the $\Z[s^{\pm 1},t^{\pm 1}]$-sum
\[
    \tau(f_+) =\mysum{i=1}{k}\; \mysum{x\, \in\,  f(S^2_+)\,\cap \,\int W_i}{} I^i_x
\]
evaluated in the quotient $\Pi(X_-, f_+)$.

Now, $\varphi_f(I^i_x)=  t^{J^i_x}$, and consequently
\begin{align*}
   \varphi_f(\tau(f_+)) &= \mysum{i=1}{k}\; \mysum{x\, \in\,  f(S^2_+)\,\cap \,\int W_i}{} {t^{J^i_x}} \;.\mycomment{12-22-15:\\
    &= \overline{\scalebox{1.3}{\#}\{(i,x)\hspace*{-0.1cm}: J^i_x=0\}}\; +\; \overline{\scalebox{1.3}{\#}\{(i,x)\hspace*{-0.1cm}: J^i_x=1\}}\cdot t,}
\end{align*}
\mycomment{where $1\leq i\leq k$ and $x\in \int W_i\cap f(S^2_+)$.}  But $\varphi(\tau(f_+))\in\{0,1,t,1+t\}$ must map forward to $0$ under the homomorphism $\Pi(X_-,f_+)\to \Pi(S^4,f_+)=\Z_2$ induced by the inclusion $X_-\subset S^4$ and given by sending $s$, $t \mapsto 1$. Thus \mycomment{12-22-15: 
\[
\scalebox{1.3}{\#}\{(i,x)\hspace*{-0.1cm}: J^i_x=0\} = \scalebox{1.3}{\#}\{(i,x)\hspace*{-0.1cm}: J^i_x=1\} \mod 2
\]
and so}
\begin{align*}
    \varphi_f(\tau(f_+)) &\equiv \mycomment{12-22-15: \overline{\scalebox{1.3}{\#}\{(i,x)\hspace*{-0.1cm}: J^i_x=1\}}\cdot(1+t)\\
    &= }\mysum{i=1}{k}\; \mysum{x\, \in\,  f(S^2_+)\,\cap \,\int W_i}{} J_i^x\cdot (1+t) \mod 2\\
    &= (1+t)\hskip0.03cm\omega_+(f).\qedhere
\end{align*}
\end{proof}
\mycomment{ Pre 12-29:
Now, the Equivalence \eqref{reln-big} in $\Pi(X_-, f_+)$ implies that  $I^i_x \equiv t^{J^i_x}$, and consequently
\begin{align*}
    \tau(f_+) &\equiv \mysum{i=1}{k}\; \mysum{x\, \in\,  f(S^2_+)\,\cap \,\int W_i}{} {t^{J^i_x}} \;.\mycomment{12-22-15:\\
    &= \overline{\scalebox{1.3}{\#}\{(i,x)\hspace*{-0.1cm}: J^i_x=0\}}\; +\; \overline{\scalebox{1.3}{\#}\{(i,x)\hspace*{-0.1cm}: J^i_x=1\}}\cdot t,}
\end{align*}
\mycomment{where $1\leq i\leq k$ and $x\in \int W_i\cap f(S^2_+)$.}  But $\tau(f_+)$ must map forward to $0$ under the homomorphism $\Pi(X_-,f_+)\to \Pi(S^4,f_+)=\Z_2$ induced by the inclusion $X_-\subset S^4$ and given by sending $s$, $t \mapsto 1$. Thus \mycomment{12-22-15: 
\[
\scalebox{1.3}{\#}\{(i,x)\hspace*{-0.1cm}: J^i_x=0\} = \scalebox{1.3}{\#}\{(i,x)\hspace*{-0.1cm}: J^i_x=1\} \mod 2
\]
and so}
\begin{align*}
    \tau(f_+) &\equiv \mycomment{12-22-15: \overline{\scalebox{1.3}{\#}\{(i,x)\hspace*{-0.1cm}: J^i_x=1\}}\cdot(1+t)\\
    &= }\mysum{i=1}{k}\; \mysum{x\, \in\,  f(S^2_+)\,\cap \,\int W_i}{} J_i^x\cdot (1+t) \mod 2\\
    &= \omega_+(f)(1+t).\qedhere
\end{align*}
}%
\end{proof}


\end{document}